\definecolor{Sepia}{rgb}{0.6,0.3,0.1}
\definecolor{RawSienna}{rgb}{0.2,0.7,0.1}
\definecolor{NavyBlue}{rgb}{0.2,0,0.5}
\definecolor{PineGreen}{rgb}{0,0.5,0}
\definecolor{Sepia1}{rgb}{0.9,0.0,0.1}
\definecolor{Kirpich}{rgb}{0.8,0.1,0.1}
\definecolor{Sepia2}{rgb}{0.4,0.,0.4}
\definecolor{Sepia3}{rgb}{0.4,0.,0.}
\definecolor{Sepia4}{rgb}{0.7,0.,0.7}
\definecolor{Sepia5}{rgb}{0.4,0.,0.7}
\theoremstyle{plain}
\newtheorem{lemma}{Lemma}
\theoremstyle{definition}
\newtheorem{remark}{Remark}
\title{On local and global aspects of the 1:4 resonance in the conservative cubic H\'enon maps.
}
\author{M. Gonchenko$^1$, S.V. Gonchenko$^2$, I. Ovsyannikov$^{2,3}$ and A. Vieiro$^1$\\
{\small $^1$ Departament de Mathem\`atiques i Inform\`atica, Universitat de Barcelona, Spain
}\\
{\small $^2$  N.I.~Lobachevsky Nizhny Novgorod
 University, Russia}\\
{\small $^3$  Fachbereich Mathematik und Informatik, Universit\"at Bremen, Germany}\\
{\small { \tt  gonchenko@ub.edu, \tt gonchenko@pochta.ru, \tt ivan.i.ovsyannikov@gmail.com, \tt vieiro@maia.ub.es}}
}
\date{}
\begin{document}

\maketitle


\begin{abstract}
We study the 1:4 resonance for the conservative cubic H\'enon maps $\mathbf{C}_\pm$ 
with positive and negative cubic term. These maps show up different bifurcation
structures both for fixed points with eigenvalues $\pm i$ and for
4-periodic orbits.  While for $\mathbf{C}_-$ the 1:4 resonance unfolding has
the so-called Arnold degeneracy (the first Birkhoff twist coefficient  equals
(in absolute value) to the first resonant term coefficient), the map $\mathbf{C}_+$
has a different type of degeneracy because the resonant term can vanish. In the
last case, non-symmetric points are created and destroyed at pitchfork
bifurcations and, as a result of global bifurcations, the 1:4 resonant chain of
islands rotates by $\pi/4$. For both maps several bifurcations are detected and illustrated.
\end{abstract}

{\bf Keywords:} strong 1:4 resonance, cubic H\'enon map, bifurcations, 4-periodic orbits.

{\bf Mathematical Subject Classification:}  37G10, 37G20, 37G40, 37J10, 37J20.

\section{Introduction}

We study the 1:4 resonance problem for the conservative cubic H\'enon maps 
\begin{equation}
\mathbf{C}_-: \bar x = y, \;\;\; \bar y= M_1 -  x + M_2 y  - y^3
\label{cubH1m}
\end{equation}
and
\begin{equation}
\mathbf{C}_+: \bar x = y, \;\;\; \bar y= M_1 -  x + M_2 y  + y^3,
\label{cubH1pl}
\end{equation}
where $x, y$ are coordinates and $M_1, M_2$ are parameters.

For area-preserving maps, the basis of the 1:4 resonance phenomenon consists in 
a local bifurcation of a fixed (or periodic) point with eigenvalues
$e^{\pm i \pi/2} = \pm i$. However, this can be only the simplest, standard
part of the general picture of the resonance. As we show, the  1:4 resonance in the
case of maps (\ref{cubH1m}) and (\ref{cubH1pl}) can be nontrivial, i.e. it can
include not only bifurcations of fixed points with eigenvalues $\pm i $
themselves (local aspects) but also a series of accompanying bifurcations
(global aspects) of 4-periodic orbits which are initially born from the
central fixed point with eigenvalues $\pm i $.

It is well-known that the strong resonances, i.e. the bifurcation phenomena
connected with the existence of fixed (periodic) points with eigenvalues  $e^{\pm
2\pi i/q}$ with $q=1,2,3,4$ (that is, the 1:1, 1:2, 1:3 and 1:4 resonances),
play a very important role in the dynamics of area-preserving
maps. Among them, the 1:4 resonance (with $q=4$) is, as a rule, the most
complicated and least studied. For example, the resonances with $q=1,2,3$
are nondegenerate for the standard conservative H\'enon map
\begin{equation}
\bar x = y, \;\;\; \bar y= M -  x - y^2.
\label{H2st}
\end{equation}
Unlike this, the resonance 1:4 is degenerate here \cite{Bir87,SV09}.

The Birkhoff local normal form at the fixed point with multipliers $\pm i$,
expressed in complex coordinates $z= x+iy$ and $z^* = x-iy$, can be written as follows 
\begin{equation}
\bar z = iz +  (B_{21}|z|^2 + B_{32} |z|^4) z  + B_{03} (z^*)^3 +  B_{50} z^5 +
 B_{14} z (z^*)^4 + O(|z|^7),
\label{nf14}
\end{equation}
where, a priori, the coefficients $B_{ij}$ are complex. The area-preserving
property requires that (i) $\text{Im}(B_{21})=0$, (ii)~$-3 B_{03}^* B_{21} - 5
i B_{50} +i B_{14}^* = 0$, and (iii) $3 B_{21}^2 + 6 \text{Im}(B_{32}) - 9
|B_{03}|^2 = 0$. Moreover, rotating the complex coordinates one can consider
$B_{03}$ real. Condition (ii) implies in this case that $\text{Re}(B_{14}) = 5
\text{Re}(B_{50})$. The main nondegeneracy conditions of the normal form (\ref{nf14}) are
as follows
\begin{equation} \label{cond}
B_{03}\neq 0 \;\;\mbox{and} \;\; A = \left|\frac {B_{21}}{B_{03}}\right| \neq  1.
\end{equation}
The simplest degeneracies occur when only one of the conditions in (\ref{cond}) does
not hold.

The local bifurcations at the 1:4 resonance, in the general (not necessarily
conservative) setting, were first studied by V.~Arnold in the 70's, see
\cite{Arn-Geom}, for the flow normal form $\dot z = \varepsilon z +
\tilde{A} z |z|^2 + (z^*)^3$, 
where $\tilde A$ is a coefficient and $\varepsilon$ is a small complex parameter.
\footnote{However, one can consider that $\varepsilon$ varies inside the unit
circle $|\varepsilon| =1$ (as it was done e.g. in \cite{AAIS86}) by rescaling
$t =T/|\varepsilon|, z = Z |\varepsilon|$.}
He showed that the structure of such resonance essentially depends
on the relation between
$\mbox{Re}\;\tilde{A}$ and $\mbox{Im}\;\tilde{A}$ and he studied several cases,
e.g. when $|\tilde{A}|<1$ or $|\mbox{Re}\;\tilde{A}|>1$. Numerous
other cases (when $|\mbox{Re}\;\tilde{A}| <1$ and $|\tilde{A}|>1$) were studied
in many papers, see e.g. \cite{Kr94,Kuz}. Concerning the conservative case,
where $\mbox{Re}\;\tilde{A}\equiv 0$, the Arnold normal form can be represented as
\begin{equation} \label{Arnold_conservative}
 \dot z = i \varepsilon z + i b z |z|^2 + i (z^*)^3, 
\end{equation}
where $\varepsilon$ and $b= \text{Im}(\tilde{A})$ are real. This normal form is nondegenerate
in the case $b \neq 1$ and its bifurcations are well-known, see
Fig.~\ref{nondegcub}. 
We see that cases $b<-1$, Fig.~\ref{nondegcub}a, and $|b|<1$,
Fig.~\ref{nondegcub}b, are very different. In particular, the equilibrium $z=0$
is always stable in the first case, whereas, it can be unstable (a saddle with
8 separatrices, at $\varepsilon =0$) in the second case.

\begin{figure}[htb]
\begin{center}
\includegraphics[width=14cm]{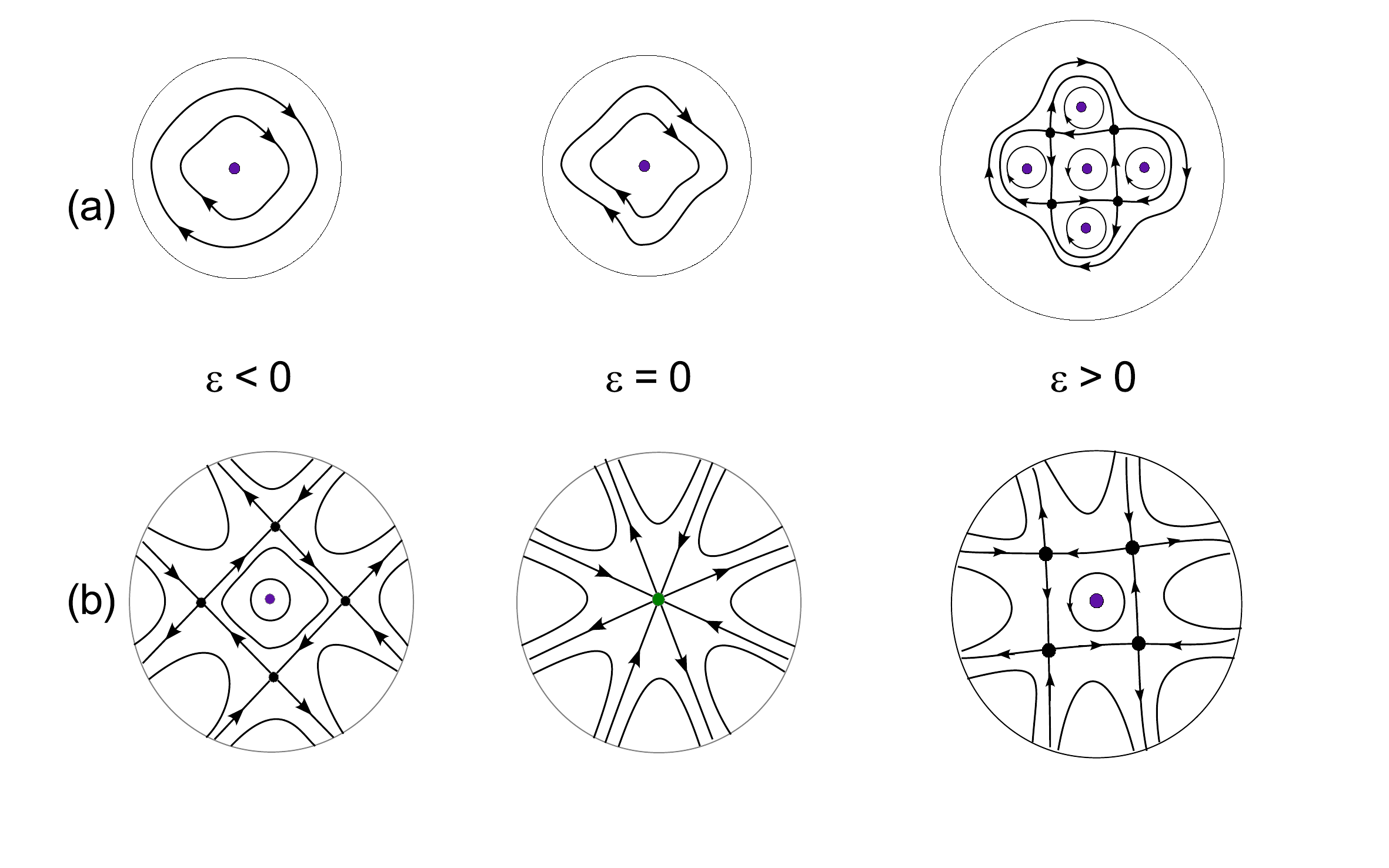}
\end{center}
\caption{Bifurcations of zero equilibrium in the family (\ref{Arnold_conservative})
for $|b| \neq 1$ in the cases (a)
$b < -1$, here only one equilibrium $z=0$ (a nonlinear center) exists at
$\varepsilon \leq 0$  and 8 equilibria appear surrounding $z=0$ at
$\varepsilon > 0$; and (b) $|b|<1$, here 5 equilibria (4 saddles and the
center $z=0$) exist at $\varepsilon<0$; at $\varepsilon=0$ all these equilibria
merge to the point $z=0$ which becomes the nonhyperbolic saddle with 8
separatrices; at $\varepsilon>0$ the point $z=0$ becomes again a center and the 4
saddles appear to be rotated by $\pi/4$ with respect to the
ones at $\varepsilon<0$.  Note that, in case $b>1$, one needs to
change both $\varepsilon$ by $-\varepsilon$ and the time direction in the (a)
row of the plot.}
\label{nondegcub}
\end{figure}

As it was shown in \cite{Bir87,SV09}, in the case of map (\ref{H2st}), the
degeneracy $A=1$ (here $B_{21} = - B_{03}$) takes place.
Note that the H\'enon map (\ref{H2st}) has a fixed point with eigenvalues $\pm
i$ at $M=0$. As it was shown in \cite{Bir87}, the family $\bar x = y, \;\;\; \bar y=
\varepsilon_1  -  x - y^2 + \varepsilon_2 y^3$ can be considered as a
two-parameter general unfolding for the study of bifurcations of this point.
This result is quite important, since such maps (conservative H\'enon maps with 
small cubic term) appear naturally as rescaled first return maps near
homoclinic orbits to saddle-focus equilibria of divergence-free
three-dimensional flows \cite{BS89} or near quadratic homoclinic tangencies of
area-preserving maps \cite{GG09,DGG15}.\footnote{ The main bifurcations of
area-preserving maps with quadratic homoclinic tangencies were studied in
\cite{MR97,GG09,DGG15}, and with cubic ones in \cite{GGO17}. In all these
papers, the main technical tool was the so-called rescaling method by which it
was possible to represent the first return map in the form of a map being
asymptotically close to the quadratic or to the cubic H\'enon map.}

In the present paper we show that degeneracy $A =1$ can take place only in
the case of cubic map (\ref{cubH1m}) (here $B_{03}<0$ always). This occurs for
$M_1 =\pm 16/27$ and $M_2 =1/3$.  For those parameter values one has
$B_{21}\neq 0$ and $B_{03}\neq 0$, see Section~\ref{sec:map_mns} for the
concrete expressions of these coefficients as a function of $M_2$. Then,
according to \cite{Bir87}, case $A=1$ is generic whenever the coefficient
$\Omega=\text{Re}(B_{32}-B_{50}-B_{14})$ is non-zero, which guarantees a non-vanishing
twist for the 4th power of the local normal form (\ref{nf14}). We
show in Fig.~\ref{Omeg} the graph of $\Omega$ for $\mathbf{C}_{-}$ and
$\mathbf{C}_{+}$ as a function of $M_2$.  In particular, we see that $\Omega
\approx -0.25$ when degeneracy $A=1$ takes place for
$\mathbf{C}_-$. A description of the local bifurcations in 
normal form (\ref{nf14}) with $A=1$, $\Omega \neq 0$ was also given in
\cite{Bir87}.  However, we do not restrict ourselves only to the study of the
local structure of this resonance -- the analysis of the corresponding normal
form is quite standard.  We also study the global effect of this resonance on
the dynamics of map (\ref{cubH1m}) as a whole. To this end, we find (analytically
and/or numerically) bifurcation curves relevant to describe the bifurcation
diagrams related to both bifurcations of the fixed point with eigenvalues
$\pm i$ as well as to accompanying bifurcations of saddle and elliptic
4-periodic orbits belonging to the corresponding resonant garland surrounding
the fixed point. We collect the corresponding results in
Section~\ref{sec:map_mns}.

\begin{figure}[tb]
\begin{center}
\includegraphics[width=0.48\textwidth]{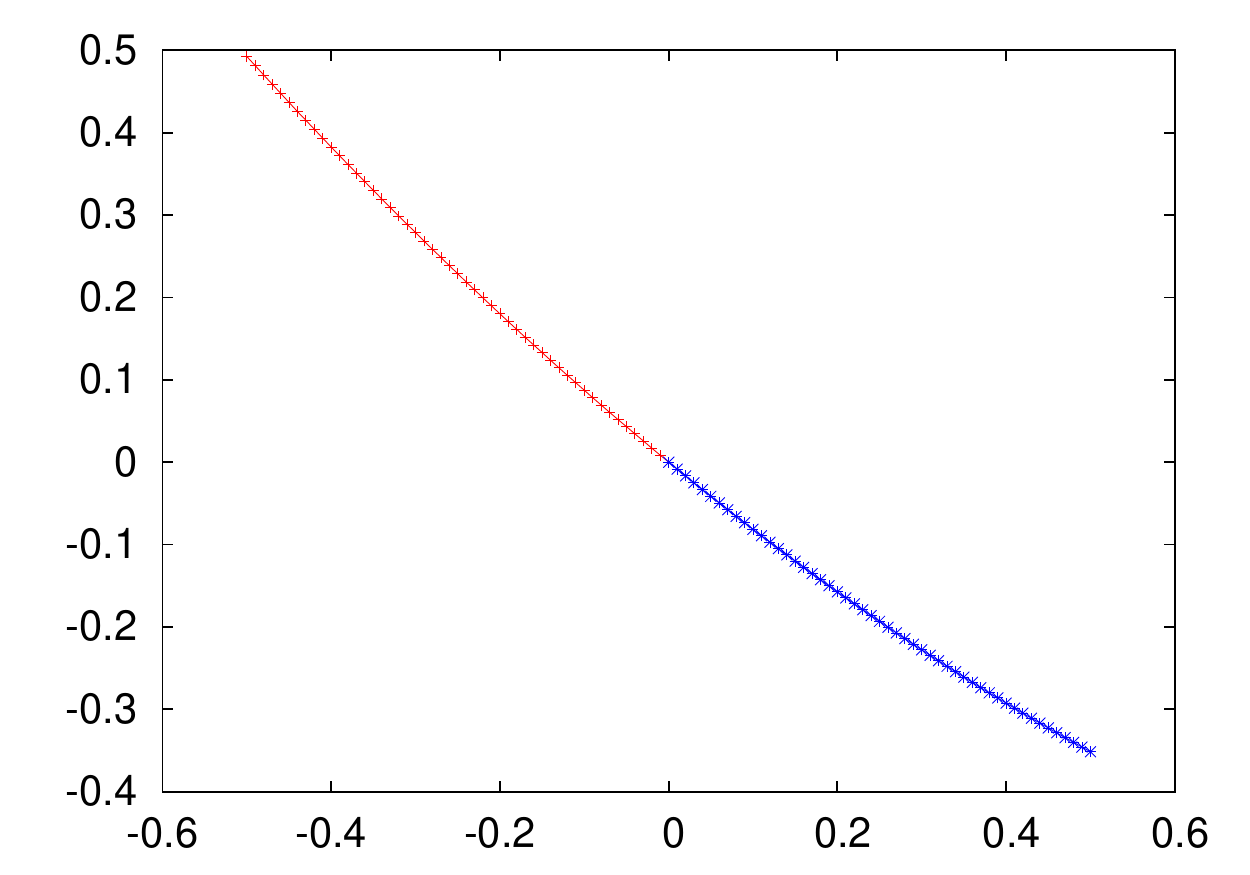}
\caption{We represent the value of the Birkhoff twist coefficient $\Omega$ of
$\mathbf{C}_{+}^4$ (for $M_2<0$) and of $\mathbf{C}_{-}^4$ (for $M_2>0$) as a
function of $M_2$.}
\label{Omeg}
\end{center}
\end{figure}

We also show that degeneracy $B_{03}=0$ can take place only in the case of
the cubic map (\ref{cubH1pl}), when $M_1 =\pm 20/27$ and $M_2 = -1/3$ (for this
map $B_{21}\neq 0$ always).  Moreover, for map $\mathbf{C}_+$, the value of
$A$ is always greater than 1. As in the case of map (\ref{cubH1m}), we study
both the local and global aspects of this resonance, see
Section~\ref{sec:map_pl}. 

As far as we know, this type of the conservative 1:4 resonance (with
$B_{03}=0$) has not been studied before,\footnote{However, it was noted in
\cite{GLRT14,GT17} that such type resonances can provoke symmetry-breaking
bifurcations (of pitchfork type) which, in the case of reversible maps, lead
to the appearance of nonconservative periodic orbits (e.g. periodic sinks and
sources).}  therefore we describe the main elements of the
local bifurcations at such resonance in the Appendix~\ref{appendix1p4deg}. In this case we assume $B_{14}^* = 5
B_{50} \neq 0$. Note that the previous equality of the coefficients guarantees the Hamiltonian character of the
corresponding flow normal form (see Eq.(\ref{2p5_3}) in Section~\ref{sec:14apm}), while the nonvanishing of those
coefficients is an additional nondegeneracy condition. In Fig.~\ref{B14-B50p}
we show the graphs of $B_{14}$ and $B_{50}$, for the case of map
(\ref{cubH1pl}), as a function of $M_2$. In particular, when
degeneracy $B_{03}=0$ takes place in map (\ref{cubH1pl}) ($M_1=\pm 20/27,
M_2 = -1/3$), one has that $B_{14} \approx -5/64$.

\begin{figure}[tb]
\begin{center}
\includegraphics[width=0.48\textwidth]{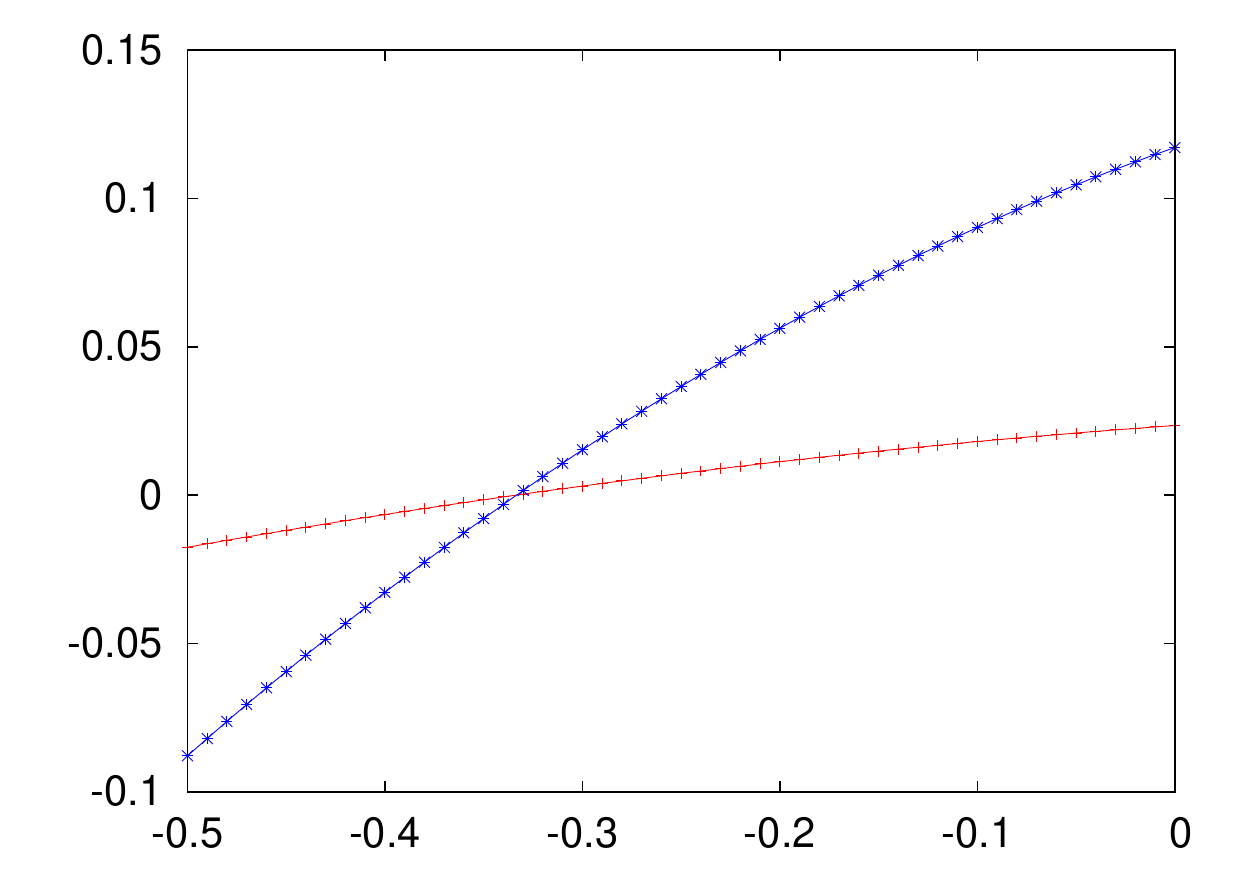}
\includegraphics[width=0.48\textwidth]{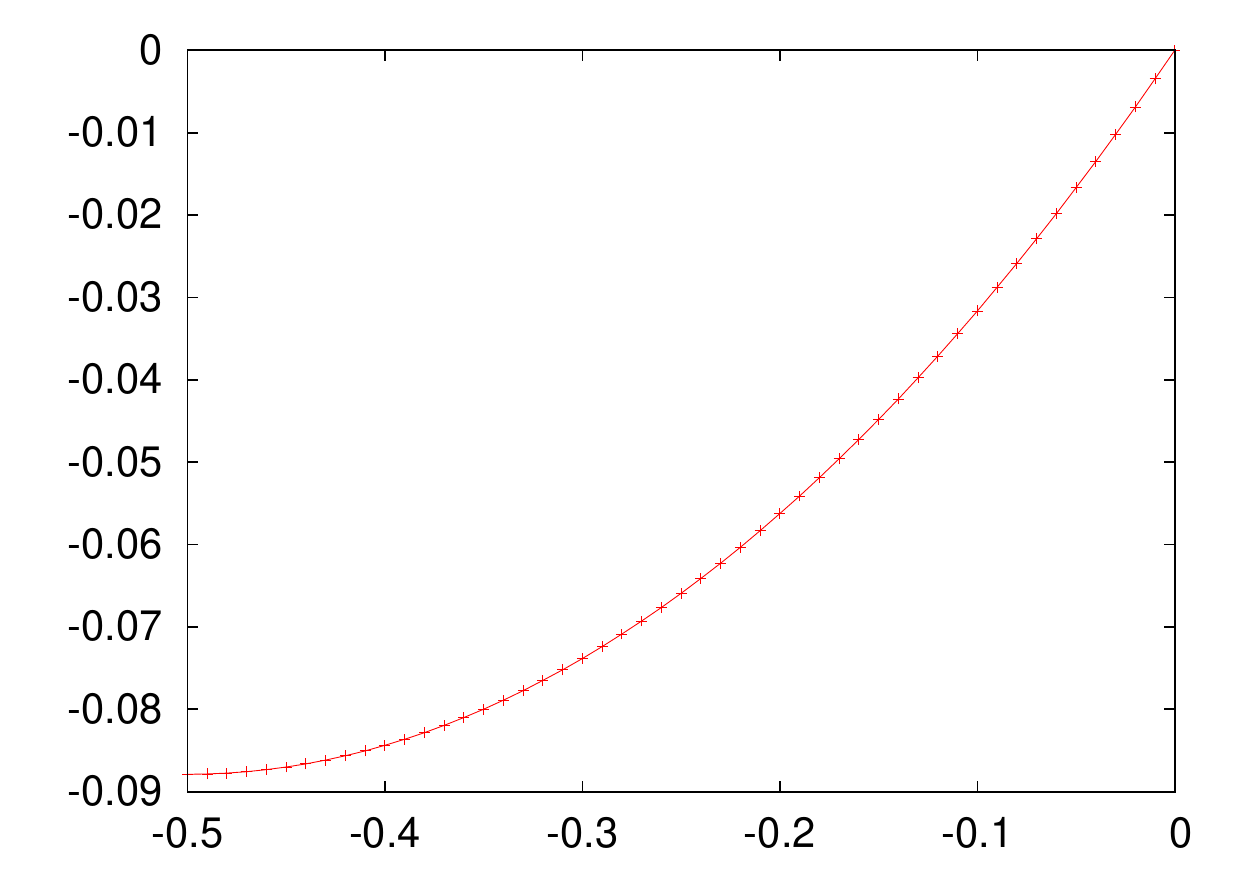}
\caption{For map $\mathbf{C}_+$, we represent in the left plot the values
of  $\text{Im}(B_{14})$ (in red) and of $5 \text{Im}(B_{50})$ (in blue) as a function of $M_2$.  Both
lines cross at $0$ for the value $M_2 = -1/3$ where degeneracy $B_{03}=0$
takes place. In the right plot we represent  $\text{Re}(B_{14})=5
\text{Re}(B_{50})$ as a function of $M_2$ for the same map.
}
\label{B14-B50p}
\end{center}
\end{figure}

We note that the cubic H\'enon maps (\ref{cubH1m})-(\ref{cubH1pl}) have an important meaning
for the theory of dynamical systems. For example, they appear as truncated
normal forms of first return maps near cubic homoclinic tangencies. In
Fig.~\ref{2typcubt} we illustrate the geometric idea how such maps can be
obtained. Let a two-dimensional map $f$ have a fixed saddle point $O$ and a
homoclinic orbit $\Gamma$ at whose points the manifolds $W^u(O)$ and $W^s(O)$
have a cubic tangency. Let $M^+ \in W^s_{loc}$ and $M^- \in W^u_{loc}$ be a
pair of points of $\Gamma$ and $\sigma_k$ be a small (horizontal) strip near
$M^+$. Under some number $m$ of iterations of $f$ the strip $\sigma_k$ is
mapped into a vertical strip $\tilde\sigma_k$ located near the point $M^-$. The
(local) map from $\sigma_k$ into $\tilde\sigma_k$  can be represented, for
simplicity, as the linear map $\bar x = \lambda^m x, \bar y = \lambda^{-m}y$,
where $(x,y)$ are coordinates of points in  $\sigma_k$ and $(\bar x,\bar y)$
are those in $\tilde\sigma_k$. Let $q$ be an integer such that $f^q(M^-) =
M^+$. Then, since the curve $f^q(W^u_{loc})$ have a cubic tangency with
$W^s_{loc}$ at the point $M^+$, the image $f^q(\tilde\sigma_k)$ of
$\tilde\sigma_k$ will have the form of a cubic horseshoe. Thus, the geometry of
the first return map $f^k : \sigma_k \to \sigma_k $, where $k=m+q$, is like the
one of a cubic horseshoe map. If one rescales  the initial coordinates $(x,y)$
and the initial parameters $\mu_1$ and $\mu_2$ (that are the usual parameters that 
unfold the initial cubic homoclinic tangency between the curves
$f^q(W^u_{loc})$ and $W^s_{loc}$ at the point $M^+$) in the appropriate way,
then one can rewrite the map $f^k : \sigma_k \to \sigma_k $ in the form of a
cubic H\'enon maps with some terms that are asymptotically small as
$k\to\infty$. Note that if $\lambda>0$, then there are two different types of
cubic homoclinic tangencies: the tangency ``incoming from above'',
see Fig.~\ref{2typcubt}a, and the tangency ``incoming from below'', see
Fig.~\ref{2typcubt}b. In the first case, the truncated rescaled map is map
(\ref{cubH1m}), while in the second case it is map (\ref{cubH1pl}). For
more details, see \cite{GGO17} for the area-preserving case and
\cite{GST96,GSV13} for the dissipative case.

\begin{figure}[htb]
\centerline{
  \includegraphics[width=14cm]{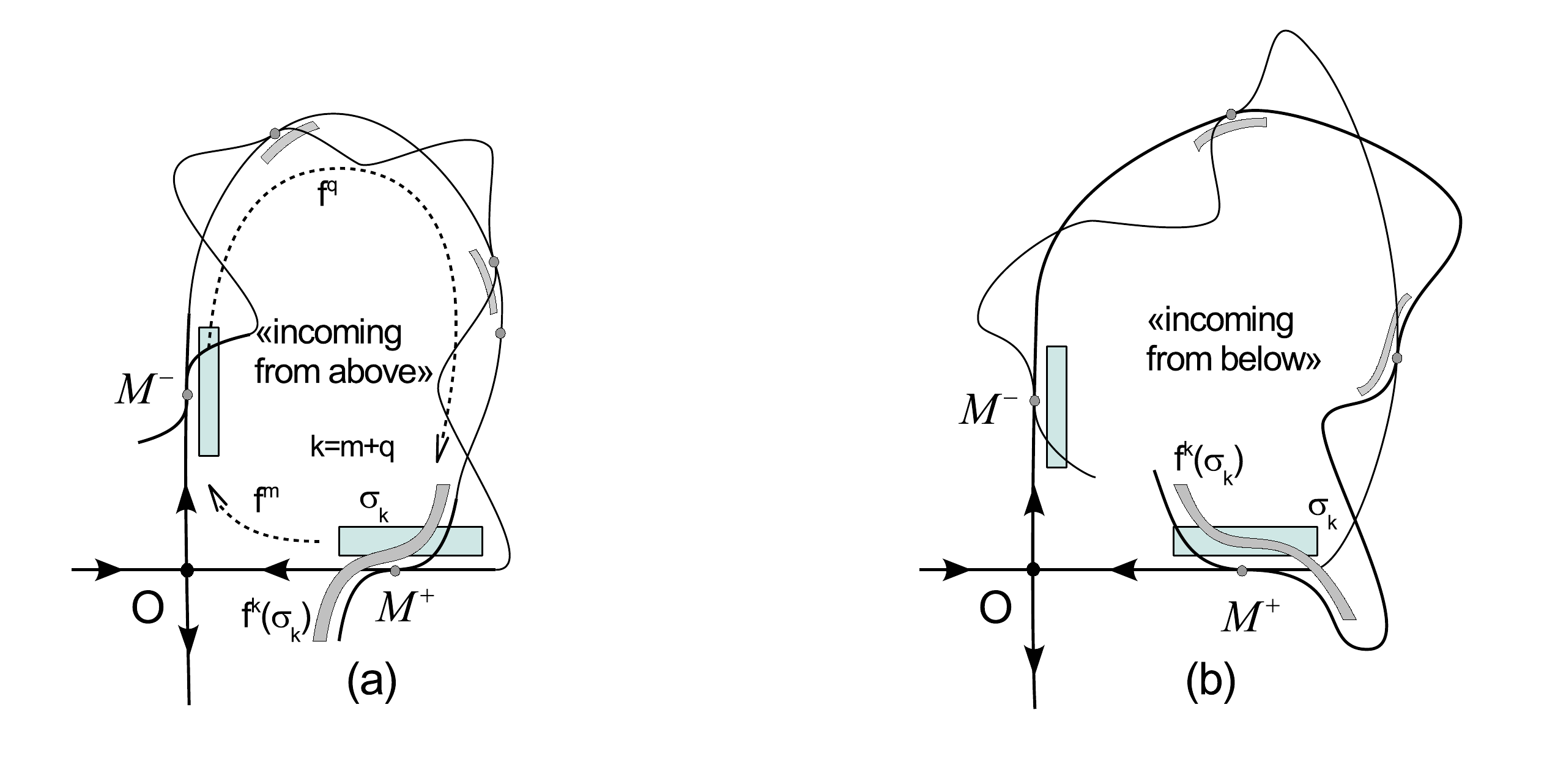}}
  \caption{Two types of cubic homoclinic tangencies to a saddle fixed point
   with positive eigenvalues and geometry of the first return maps. }
  \label{2typcubt}
\end{figure}

The main bifurcations of the dissipative cubic H\'enon maps (when the absolute
value of the Jacobian is less than~1) were studied in \cite{Gon85,GK88}.  In
the paper \cite{DM00} some bifurcations of conservative maps (\ref{cubH1m}) and
(\ref{cubH1pl}) were studied.  Note that one of the main goals of \cite{DM00}
was to describe the bifurcation structure of the 1:1, 1:2 and 1:3 strong
resonances. The present work as well as the papers \cite{MGon05,GGO17} are
devoted mainly to the study of the local and global aspects of the 1:4
resonances, which complements the research of \cite{DM00} on the dynamics and
the bifurcations of the conservative cubic H\'enon maps.

The paper is organised as follows. In Section~\ref{sec:14apm} we review the 1:4
resonance for general area-preserving maps and comment about the degenerate
cases. In Section~\ref{sec:map_mns} we consider map $\mathbf{C}_-$
and describe the local and global aspects of the 1:4 resonance
in this concrete case.  The degeneracy $A=1$ occurs here and we study its influence
on the global bifurcation diagram.  The same type of local and global bifurcation
analysis is performed in Section~\ref{sec:map_pl} for the 1:4 resonance of the
map $\mathbf{C}_+$. We show that degeneracy $B_{03}=0$ takes place in
this case. As far as we know, this degeneracy has not been studied before, therefore we include
the normal form analysis of this bifurcation in Appendix~\ref{appendix1p4deg}.
For both maps, some of the bifurcation curves have been explicitly obtained and their
equations are derived in Appendix~\ref{sec:4par}. 
Finally, in Section~\ref{sec:conclusions} we comment on related topics where
the results obtained in this paper could be relevant.

\section{Local aspects of the 1:4 resonance in area-preserving maps.} \label{sec:14apm}

The unfolding of the non-degenerate 1:4 resonance leads to the one-parameter
family of area-preserving maps 
\begin{equation}
\begin{array}{l}
\bar z=i e^{i \tilde{\beta}} z + B_{21} z^2 z^* +
          B_{03}(z^{*})^3+O(|z|^5),
\label{HeComplNew13}
\end{array}
\end{equation}
being $\tilde{\beta}$ a real parameter characterizing the deviation of the angle argument
$\varphi$ of the eigenvalues of the fixed point from~$\pi/2$ ($\varphi = \tilde{\beta} +
\pi/2$), the coefficients $B_{21} :=B_{21}(\tilde{\beta})$ and $B_{03}:=B_{03}(\tilde{\beta})$
are real and depend smoothly on~$\tilde{\beta}$.
If $B_{03} \neq 0$, the fourth iteration of map (\ref{HeComplNew13}) can be locally embedded
into the one-parameter family (\ref{Arnold_conservative}) of Hamiltonian flows, being $b(\tilde \beta)
= B_{21}(\tilde \beta)/B_{03}(\tilde \beta)$ and $\varepsilon = 4\tilde\beta/B_{03}$. Local bifurcations of
this Hamiltonian system are shown in Fig.~\ref{nondegcub}.

As mentioned in the introduction, the 1:4 resonance is degenerate whether
$A=|b(0)|=1$ or $B_{03}(0) =0$, see (\ref{cond}).

The case $A=1$ appears in the bifurcation diagram of $\mathbf{C}_-$, see
Section~\ref{sec:map_mns}.  The unfolding of this case  leads to a
two-parameter family of area-preserving maps.  The fourth iteration of such a
family is close-to-identity and can be approximated by the flow of the
Hamiltonian system 
\begin{equation}
\displaystyle
\dot z =  i \beta z +
i(1+\mu) z |z|^2 + i z^{*3}  + i \tilde{B}_{32}|z|^4z   +  i \tilde{B}_{50} z^5 +  i \tilde{B}_{14} |z|^2 z^{*3} + O(|z|^7),
\label{2p5_2}
\end{equation}
where  $\beta = 4 \tilde{\beta}/ B_{03}$, 
$\mu$ is the parameter responsible for the deviation from $A=1$.
The coefficients $\tilde{B}_{ij}$ are related to those in (\ref{nf14}), see \cite{Bir87,Gelf,SV09}.
Namely, one has
 \[1+\mu = \frac{B_{21}}{B_{03}} + \mathcal{O}(\beta), \quad  \tilde{B}_{32} = \frac{\text{Re}(B_{32})}{B_{03}} + \mathcal{O}(\beta), \quad
 \tilde{B}_{50} = \frac{\text{Re}(B_{50})}{B_{03}} + \mathcal{O}(\beta), \quad \tilde{B}_{14} = \frac{\text{Re}(B_{14})}{B_{03}} + \mathcal{O}(\beta).
\]
The vector field has zero divergence provided that $5 \tilde{B}_{50}=\tilde{B}_{14}$.

The bifurcation diagram for~(\ref{2p5_2}), when considering $(\beta,\mu)$ in a
small neighbourhood of the origin (case $A=1$), is displayed in
Fig.~\ref{fig_pi2-A1}.  There are three bifurcation curves $L_1$, $L_2$ and
$L_3$ dividing the~$(\beta,\mu)$-parameter plane into 3 domains. Curves
$L_1:\{\beta=0,\mu>0\}$ and  $L_2:\{\beta=0,\mu<0\}$ correspond to the passage
from I to II and II to III, respectively, and reconstructions of nonzero saddle
equilibria occur. Curve~$L_3:\{\mu=\sqrt{-\beta \Omega} + O(|\beta|)\}$
corresponds to a $\pi/2$-equivariant parabolic bifurcation: 4 saddles and 4
elliptic equilibria appear when crossing from I to III.
This bifurcation takes place far away from the origin of coordinates and it is
a codimension one bifurcation for~(\ref{2p5_2}) since the flow is invariant
under the rotation of angle~$\pi/2$. 
Note that the origin is a non-degenerate conservative center
for~$\beta \neq 0$; it is a degenerate saddle with 8 separatrices
for~$(\beta,\mu)\in L_1$ and a degenerate conservative center for
$(\beta,\mu)\in L_2$. The origin in the~$(\beta,\mu)$-plane, which corresponds
to the case $A=1$, is the endpoint of all the three bifurcation curves. 
Note that, in Fig.~\ref{fig_pi2-A1}, we represent the bifurcation diagram for the
case $\Omega <0$ because for map $\mathbf{C}_{-}$, as we have computed, one
has $\Omega \approx -0.25$, see Fig.~\ref{Omeg}.

\begin{figure}[tb]
\centering
\includegraphics[height=8cm]{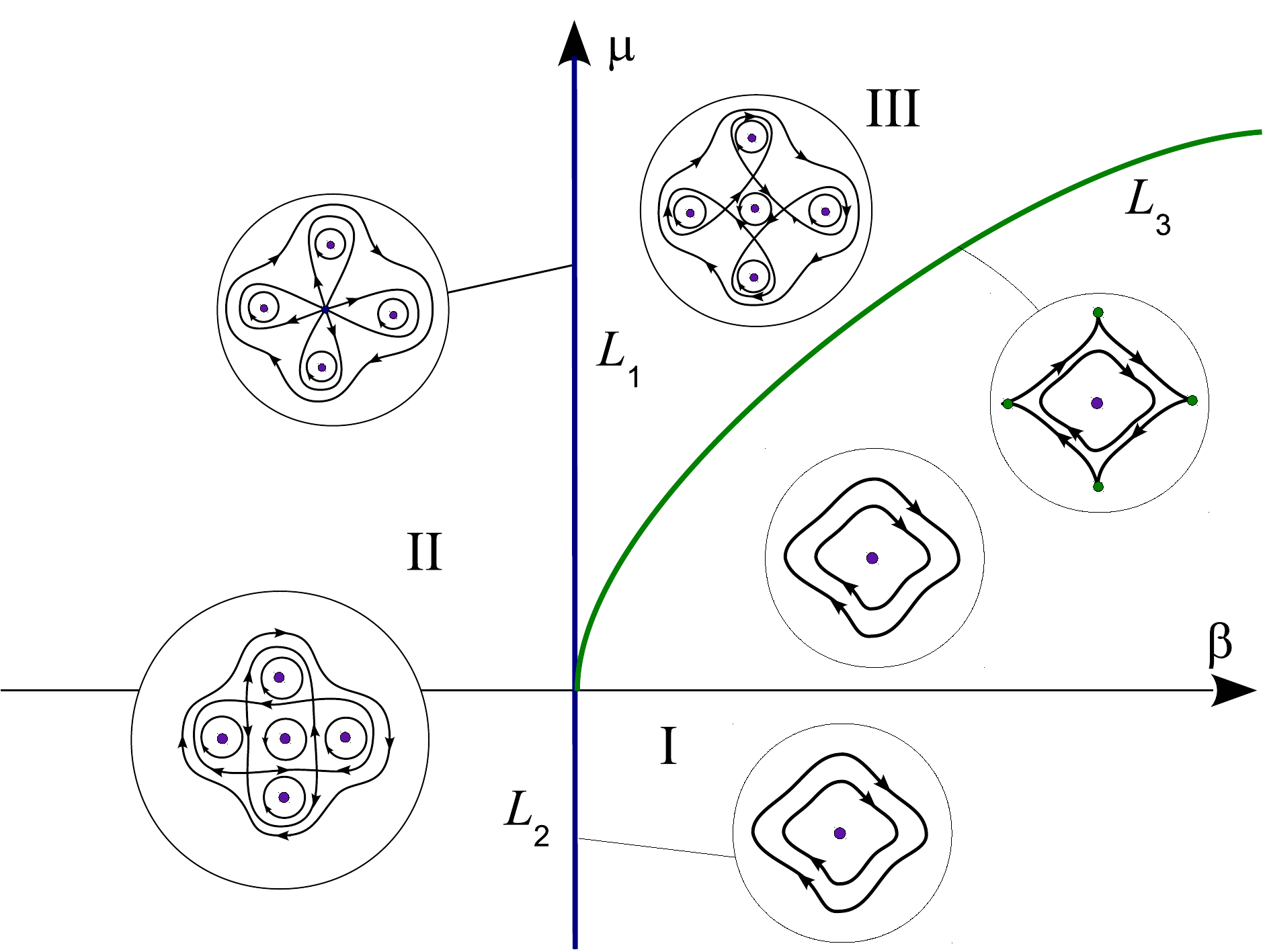}
\caption{ Main elements of bifurcation diagram in the case $A=1$ (we assume $\Omega <0$).}
\label{fig_pi2-A1}
\end{figure}

The unfolding of case $B_{03}=0$ reduces to the study of the Hamiltonian flow
\begin{equation}
\displaystyle
\dot z =  i \hat{\beta} z + i \hat{B}_{21} z |z|^2 + i \epsilon z^{*3}  + i \hat{B}_{32}|z|^4z   +  i \hat{B}_{50} z^5 +  i \hat{B}_{14} |z|^2 z^{*3} + O(|z|^7),
\label{2p5_3}
\end{equation}
where $\hat{\beta}=4\tilde{\beta}$ and $\epsilon$ are real parameters and $\hat{B}_{ij} = \text{Re}(B_{ij}) + \mathcal{O}(\hat{\beta})$ comparing with (\ref{nf14}).
This degeneracy appears only in the case of map $\mathbf{C}_{+}$, see Section~\ref{sec:map_pl}.  
The generic scenario happens whenever $B_{21}\neq 0$ and $\text{Re}(B_{50}) \neq 0$. 
The bifurcation diagram for (\ref{2p5_3}), when considering
$(\epsilon,\hat{\beta})$ in a small neighbourhood of the origin, is displayed in
Fig.~\ref{fig_pi2-B03}. 
The curve $L_2 : \{ \hat{\beta} =0 \}$ corresponds to
the 1:4 resonance of the fixed point (the equilibrium $z=0$ of flow (\ref{2p5_3}) has two zero eigenvalues). The curves $L^{\pm} : \{ \hat{\beta} =
\frac{\epsilon}{2 \text{Re}(B_{50})} \left( B_{21}  \pm
\frac{\epsilon}{2} \right) + \mathcal{O}(\hat{\beta} \epsilon), \ \epsilon \,
\text{Re}(B_{50})<0 \}$ correspond to $\pi/2$-equivariant pitchfork
bifurcations related to the creation of 8 nonzero equilibria of the system
(\ref{2p5_3}) (4 of the equilibria correspond to a saddle 4-periodic orbit of the map while the other 4 equilibria correspond to an elliptic 4-periodic orbit of the map).  The illustration in
Fig.~\ref{fig_pi2-B03} corresponds to $\text{Re}(B_{50})<0$ and $B_{21}>0$, as
happens for the map~$\mathbf{C}_+$ for $M_2=-1/3$ when degeneracy $B_{03}=0$ takes
place (see Fig.~\ref{B14-B50p} right and Section~\ref{sec:map_pl}). 

Note that normal forms (\ref{2p5_2}) and (\ref{2p5_3}) are Hamiltonian and
reversible with respect to two linear involutions: involution $R:\; z\to
z^*$ (in the real coordinates $(x,y)$, it corresponds to involution $(x,y)\to
(x,-y)$) and involution $R^*: z \to i z^*$ (it corresponds to 
involution $(x,y)\to (y,x)$). Since flows (\ref{2p5_2}) and (\ref{2p5_3}) are
$\pi/2$-equivariant, two additional involutions also exist: $\tilde R:\; z\to
-z^*$ (it corresponds to $(x,y)\to (-x,y)$) and $\tilde R^*: z \to -i z^*$
(it corresponds to the involution $(x,y)\to (-y,-x)$). The lines of fixed
points of these involutions are the following: $\text{Fix}(R)= \{y=0\}$; $\text{Fix}(\tilde R)=
\{x=0\}$; $\text{Fix}(R^*)= \{x=y\}$ and $\text{Fix}(\tilde R^*)= \{x=-y\}$.

Returning to the bifurcation diagram of Fig.~\ref{fig_pi2-B03}, we see that at
$(\hat{\beta},\epsilon)\in \text{I}$ only one equilibrium exists (the trivial equilibrium
$z=0$ that is a center), while nontrivial equilibria (centers and saddles)
appear in the other regions of the diagram. In regions II and IV there are 8
nontrivial equilibria while in region III there exist 16 nontrivial equilibria.  In
the case of regions II and IV, all nontrivial equilibria are symmetric, i.e.
belong to the lines of fixed points of the involutions: in II two of the four
centers belong to the axis $y=0$ ($\text{Fix}(R)$) while the other two belong
to $x=0$ ($\text{Fix}(\tilde R)$), and two of the four saddles belong to the
bisectrix $x=y$ ($\text{Fix}(R^*)$) while the other two belong to $x=-y$
($\text{Fix}(\tilde R^*)$). We see in region IV another disposition of these
equilibria: the picture seems turned by an angle of $\pi/4$. Due
to the strong reversibility properties of system (\ref{2p5_3}), such simple
rotation of the garland is impossible without bifurcations. The corresponding
(providing such a rotation) symmetry breaking bifurcations, at the passage from
domain II to domain IV, are schematically shown in
Fig.~\ref{fig_pi2-B03}. When passing from I to III$_b$ the centers undergo
(supercritical) pitchfork bifurcations: they all become saddles and four pairs
of nonsymmetric centers are born. The curve $L_{\text{hom}}$ in domain III
corresponds to a global $\pi/2$-equivariant bifurcation of creation of
heteroclinic connections between all 8 saddles.\footnote{For the case of map
$\mathbf{C}_+$, a homoclinic zone should exist instead of the simple curve
$L_{\text{hom}}$.}  The passage through $L_{\text{hom}}$ from  III$_b$ to
III$_a$ is related to reconstructions of the separatrices of the saddles. After
this, at passage from III$_a$ to IV, we observe a (subcritical) pitchfork
bifurcation where asymmetric centers merge with symmetric saddles and the
latter become centers.

Some details of
the analysis of normal form  (\ref{2p5_3}) are given in Appendix~\ref{appendix1p4deg} \footnote{
In Appendix~\ref{appendix1p4deg} we denote $b_1 = \hat{\beta}$, $\mu=2 \epsilon$, $b_2 = 2 \hat{B}_{21}$
and $B=16 \hat{B}_{50}$.}.  See Fig.~\ref{fig:res14_p_M20p5} for the bifurcation
scenario taking place for $\mathbf{C}_{+}$ when crossing the region III.

\begin{figure}[tb]
\centering
\includegraphics[height=8cm]{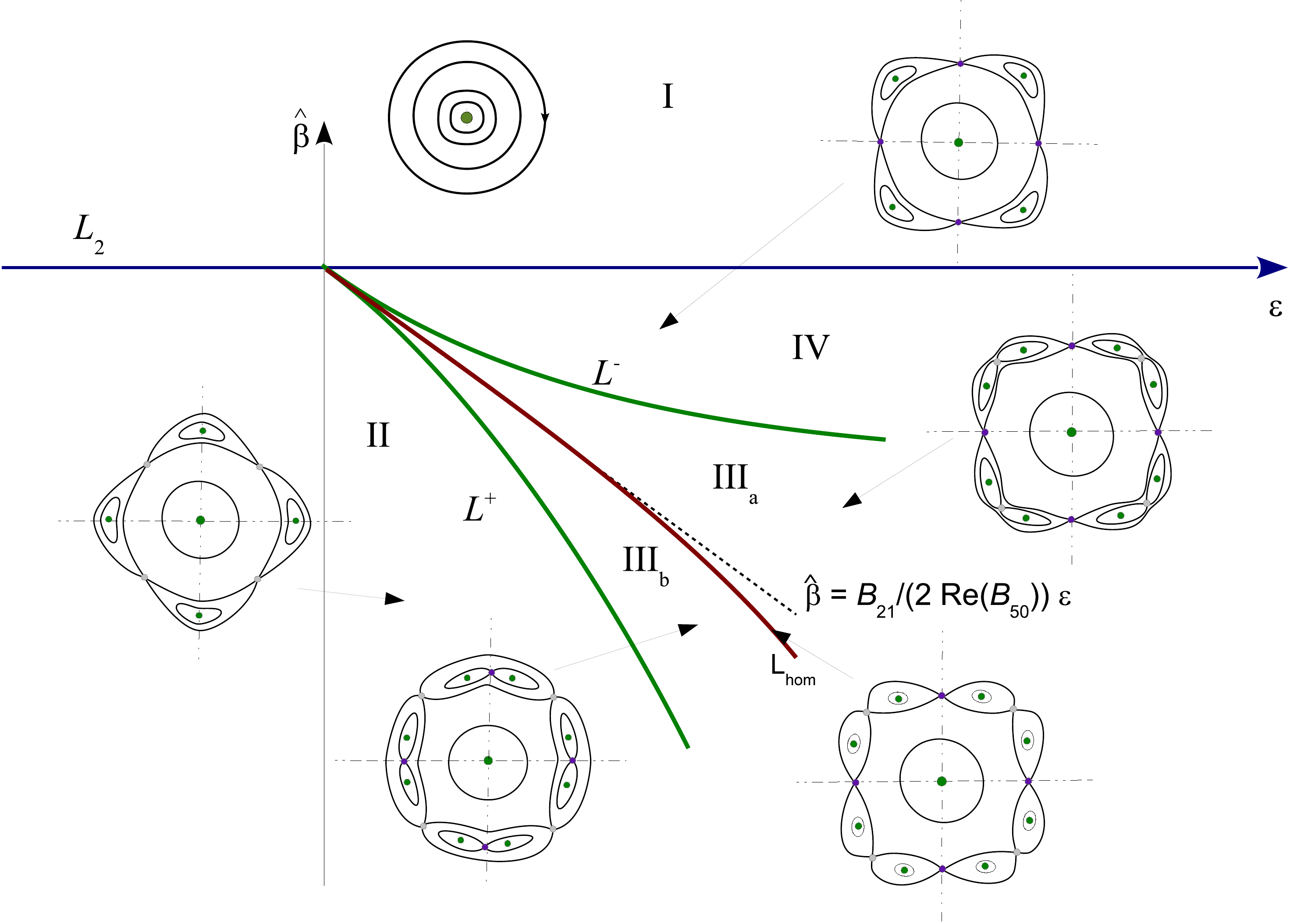}
\caption{ Main elements of the bifurcation diagram in case $B_{03}=0$ (we assume $\text{Re}(B_{50})<0$ and $B_{21}>0$).
 }
\label{fig_pi2-B03}
\end{figure}

\section{The 1:4 resonance in map~$\mathbf{C}_-$ }\label{sec:map_mns}

Consider map (\ref{cubH1m}) with parameters $(M_1,M_2)$ on the 1:4 resonance curve
\begin{equation}
L^-_{\pi/2}\;:\; M_1^2 = \frac{4}{27}M_2 (M_2 -3)^2,
\label{eq:res-}
\end{equation}
see Fig.~\ref{fg:1p4menys_corves}. For $M_1>0$ (resp. $M_1<0$) map
~$\mathbf{C}_-$ has $P_{\pi/2}^- = (-\sqrt{M_2/3}, -\sqrt{M_2/3})$ (resp.
$P_{\pi/2}^+ = (\sqrt{M_2/3}, \sqrt{M_2/3})$) as a fixed point with eigenvalues
$\pm i$. Note that $L_{\pi/2}^-$ has a self-intersection point
($M_1=0,M_2=3$) where the map has  two fixed points $(-1,-1)$ and $(1, 1)$ with
{eigenvalues}~$\pm i$ simultaneously.
 
The unfolding of normal form (\ref{HeComplNew13}) at the fixed point
$P_{\pi/2}^{\pm}$ (taking the suitable sign) has coefficients $8 B_{21}(0) = -3
+ 3 M_2,\; 8 B_{03} (0)= -1 - 3 M_2$, see~\cite{MGon05}.  Since $M_2\geq 0$ in
curve~$L_{\pi/2}^-$, we get $B_{03}(0) <0$.  Consequently, one has
$$\displaystyle A =  \frac{|3 - 3 M_2|}{1 + 3 M_2},$$ and all the three cases
$A>1$, $A<1$ (see Fig.~\ref{nondegcub}), and $A=1$ (see Fig.~\ref{fig_pi2-A1}) take place for $\mathbf{C}_-$.  The case $A=1$ occurs at
the points $P^+(M_1 = 16/27, M_2 = {1}/{3})$  and $P^-(M_1 = -16/27, M_2 =
{1}/{3})$, both in curve $L^-_{\pi/2}$, see Fig.~\ref{fg:1p4menys_corves}.

The bifurcation diagram of the 1:4 resonance in map~$\mathbf{C}_-$ is
displayed in Fig.~\ref{fg:1p4menys_corves}, where the bifurcation
curves~$L_{\pi/2}^-$, $L_4^i$ and~$\tilde L_4^i$, $i=1, 2, 3, 4$,  are shown in
the~$(M_1,M_2)$-parameter plane. The equation for~$L_{\pi/2}^-$ is written
in~(\ref{eq:res-}).  The curves $L_4^i$, $i=1,\dots, 4$, are the curves of
parabolic 4-periodic orbits with double eigenvalue 1.  The curves $\tilde
L_4^i$, $i=1, \dots, 4,$ are the curves of parabolic 4-periodic orbits with
double eigenvalue -1. Concretely:

\begin{itemize}
\item The curves $L^{1,2}_4$, whose equations are
\begin{equation} \label{eqL412}
L_4^{1,2} :\;    27  M_1^2  = 4 (1 + M_2)^{3},\;  \text{ where } M_2 > 1/3 \text{ and } M_1 > 0 \text{ for } L^1_4 \text{ while } M_1 < 0 \text{ for } L^2_4,
\end{equation}
are quadratically tangent to the curve~$L_{\pi/2}^-$
at the points~$P^\pm$ (case $A=1$). When crossing $L^{1,2}_4$ from bottom to
top two 4-periodic orbits are created as a result of a parabolic
(elliptic-hyperbolic) bifurcation.  Note that curves $L^{1,2}_4$ come from
the $L_3$ curve in Fig.~\ref{fig_pi2-A1}.  

\item The curve $L_4^3$, given by
\begin{equation} \label{eqL43}
 L_4^3 :\;   27 M_1^2 = 4 (2 + M_2)^2 (M_2 - 1),
\end{equation}
is associated with pitchfork bifurcations of elliptic 4-periodic orbits. This
is a consequence of the fact that the periodic orbit has a point on the
symmetry line $y=x$, see details in the proof of Lemma~\ref{lm:4par_minus}. 

\item The curve $L_4^4$, with equation
\begin{equation} \label{eqL44}
L_4^4 : \; 27 M_1^2  = 4  (M_2-2)^{3},
\end{equation}
corresponds to a parabolic bifurcation curve for 4-periodic (non-symmetric)
orbits.  These orbits are not of Birkhoff type since they are not ordered orbits
surrounding the elliptic fixed point $P^{\pm}_{\pi/2}$.

This curve has a special property. For parameters on $L_4^4$ with $M_1>0$
(resp. $M_1<0$) the fixed point $Q_-=(- \sqrt{(M_2-2)/3}, - \sqrt{(M_2-2)/3})$
(resp. $Q_+= -Q_-$) of map $\mathbf{C}_-$ is created at a parabolic
bifurcation. That is, both parabolic bifurcations, for the 4-periodic orbit and the fixed
point, take place simultaneously. See details below. This peculiarity is due to
the simple form of the cubic H\'enon map and clearly is not a persistent
property under arbitrary small perturbations.

\item The curves $\tilde L_4^i$, $i=1, \dots, 4,$ are the curves of parabolic 4-periodic
orbits with double eigenvalue -1. We have computed these curves
numerically. The curves $\tilde L_4^i$, $i=1, 2, 3$ correspond to period-doubling
bifurcations of elliptic 4-periodic orbits. 
\end{itemize}

\begin{figure}[tb]
\begin{center}

\hspace{0.3cm}
\includegraphics[]{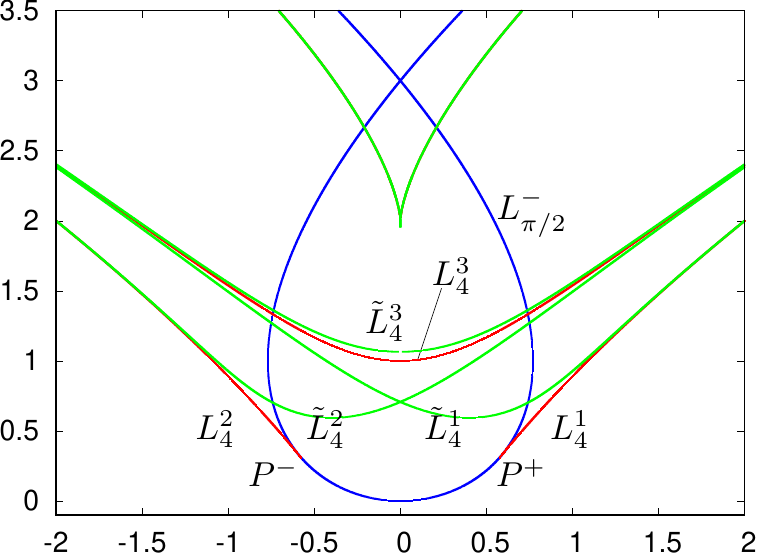} 
\hspace{1cm}
\includegraphics[]{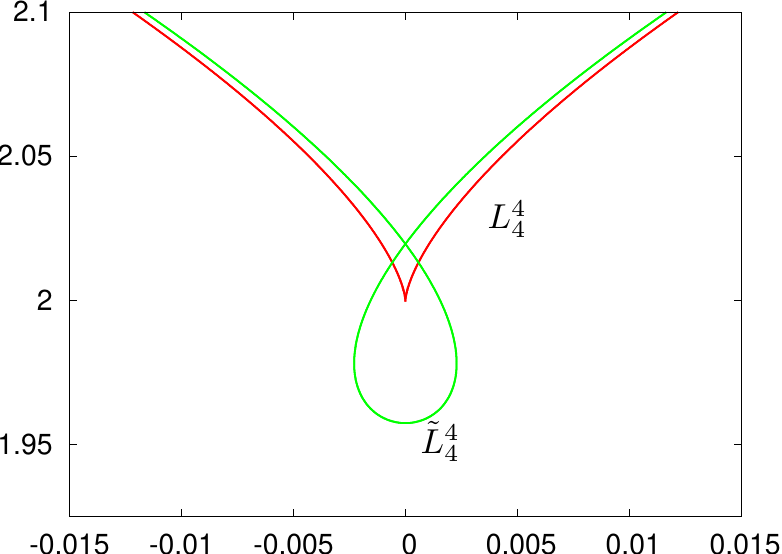}

\caption{Left: Bifurcation curves for the map~$\mathbf{C}_-$. Right: A zoom of the domain near $M_1=0, M_2=2$. }
\label{fg:1p4menys_corves}
\end{center}
\end{figure}

\begin{remark}
The curves $L^{1,2}_4$ together with $L_{\pi/2}^-$ reflect a peculiarity of
local aspects of the 1:4 resonance in the case of cubic map $\mathbf{C}_-$.
On the other hand, the curves $L_4^3$ and $L_4^4$ and the related bifurcations 
can be considered as peculiarities of the global aspects. 
Moreover, the curve $L_4^4$ has no direct relation with the problem of 1:4
resonance since it is a parabolic bifurcation curve for simultaneously two
non-symmetric 4-periodic orbits which are symmetric one to other with respect
to involution $R:(x,y) \rightarrow (y,x)$.  
\end{remark}

Let us give further details on how bifurcation curves organize the parameter
space. Fix a vertical line $M_1=C$ with $|C| > 4/3\sqrt{3}$.  For parameters
on this line, as we change $M_2$ from bottom to top, one has the following
sequence of bifurcations:
\begin{itemize}
\item For $C>4/3\sqrt{3}$ (resp. $C<-4/3\sqrt{3}$) the elliptic 4-periodic 
orbit created on $L_4^1$ (resp. $L_4^2$) becomes hyperbolic when crossing
$\tilde L_4^1$ (resp. $\tilde L_4^2$) and, at the crossing of this
period-doubling bifurcation curve, an elliptic 8-periodic orbit is born.
\item For larger $M_2$, there appears an elliptic 4-periodic orbit at the
inverse period-doubling bifurcation that takes place on $\tilde L_4^2$ (resp.
$\tilde L_4^1$). 
\item  This elliptic 4-periodic orbit undergoes a pitchfork bifurcation when
crossing $L_4^3$ and two elliptic 4-periodic orbits persist. The location of
the latter elliptic orbits is symmetric with respect to involution $R:
(x,y)\to (y,x)$. See more details in the proof of Lemma~\ref{lm:4par_minus}.

\item Those symmetric orbits undergo a period-doubling bifurcation at $\tilde L_4^3$.
\end{itemize}
In Fig.~\ref{M1_0p8} we show a sequence of phase space plots where these
bifurcations can be observed. The illustrations are for parameters on the
vertical line $M_1=0.8$.  The Fig.~\ref{M1_0p8} top left corresponds to
$M_2=0.65$, which is located between $L_4^1$ and $\tilde L_4^1$. We see the
main island around the fixed point and the 4-periodic satellite islands. There
are an elliptic 4-periodic orbit and a hyperbolic 4-periodic  one (this is
located near the peaks of the main stability island, the invariant
manifolds of each one of this saddle 4-periodic points surround the
corresponding satellite island). In Fig.~\ref{M1_0p8} top center we see a
magnification of the 4-periodic satellite island of stability around the
elliptic 4-periodic  orbit. This elliptic orbit undergoes a period-doubling
bifurcation when crossing $\tilde L_4^1$, the two stability islands can be seen in 
Fig.~\ref{M1_0p8} top right for $M_2=0.748$. Finally the Fig.~\ref{M1_0p8}
bottom left and right, for $M_2=1.36$ and $M_2=1.38$ respectively, illustrate
the pitchfork bifurcation taking place when crossing $L_4^3$.	

\begin{figure}[tb]
\begin{center}
\includegraphics[width=0.35\textwidth]{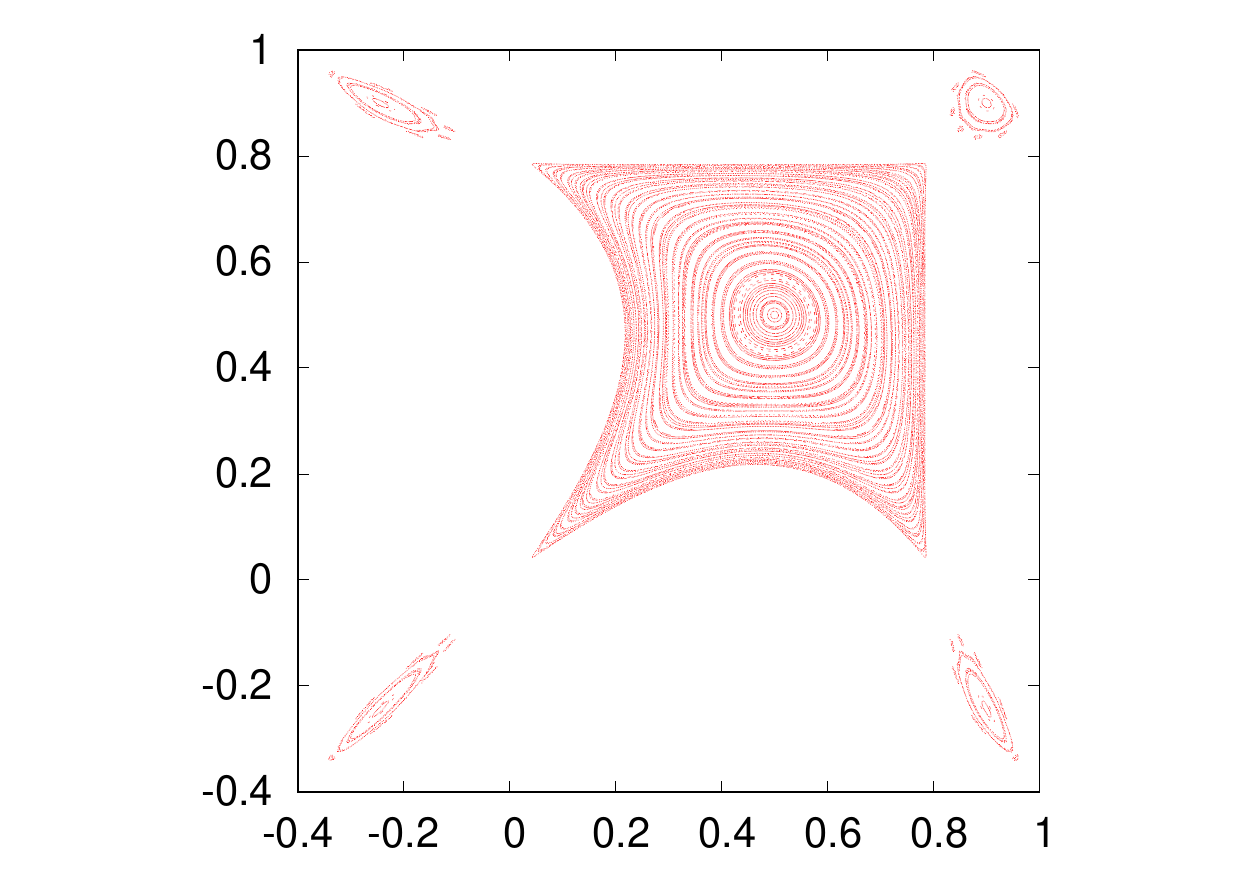} 
\hspace{-1cm}
\includegraphics[width=0.35\textwidth]{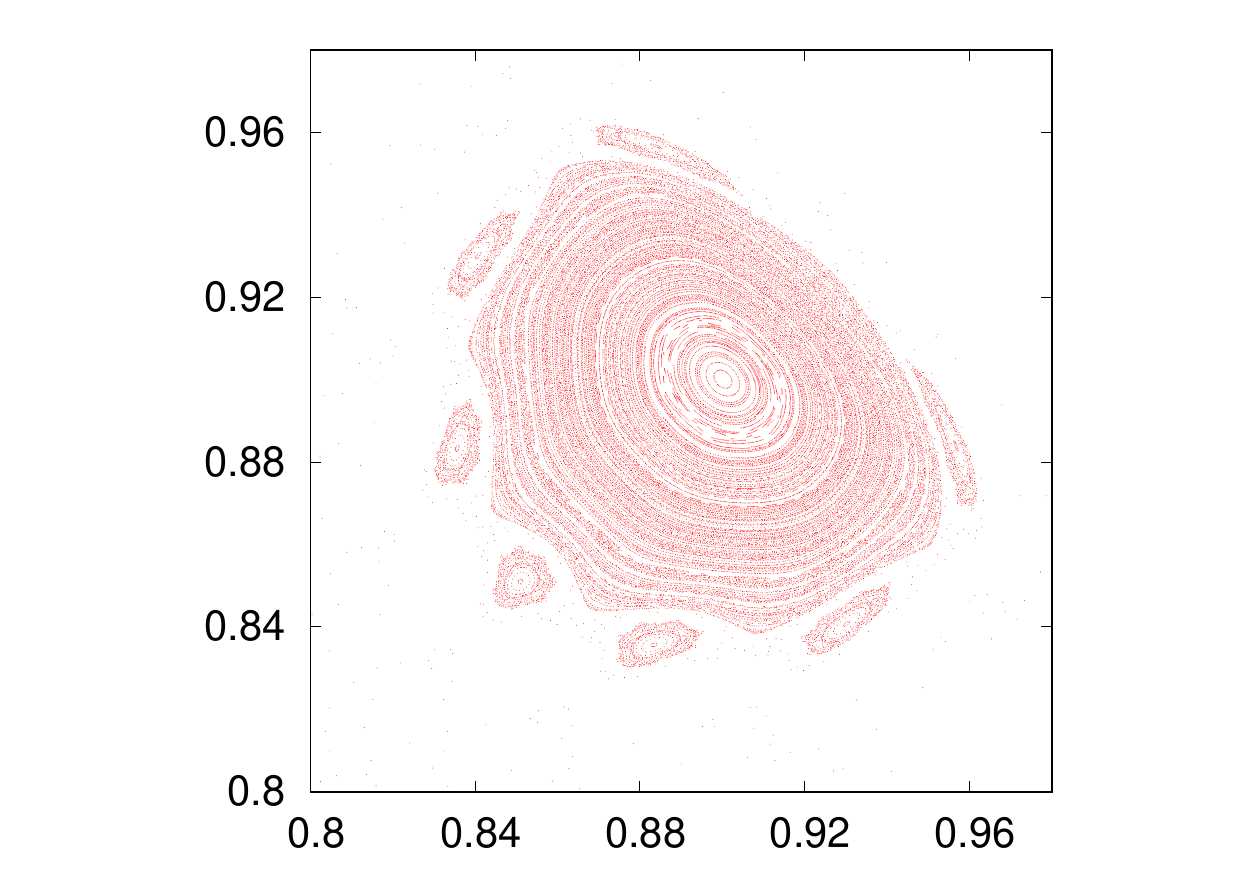} 
\hspace{-1cm}
\includegraphics[width=0.35\textwidth]{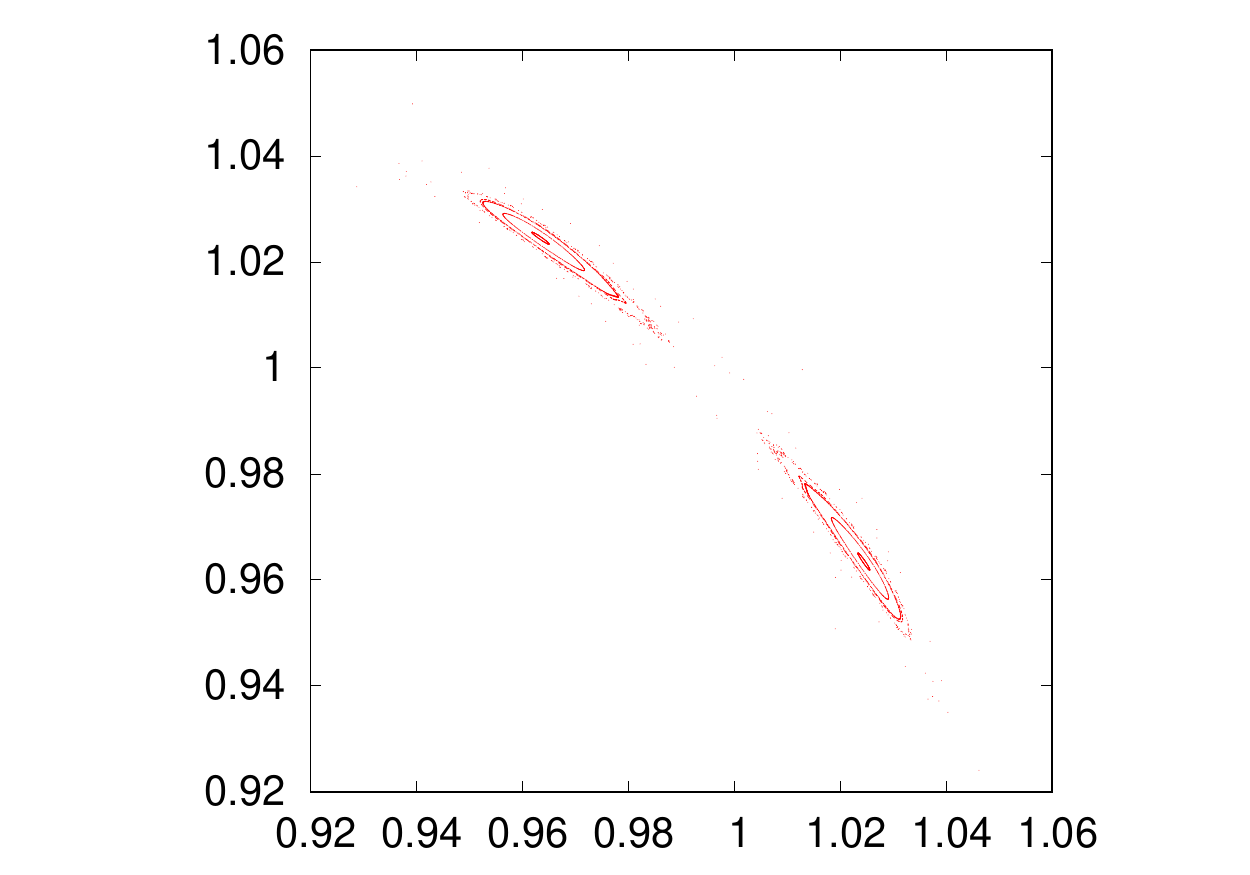}  \\
\includegraphics[width=0.35\textwidth]{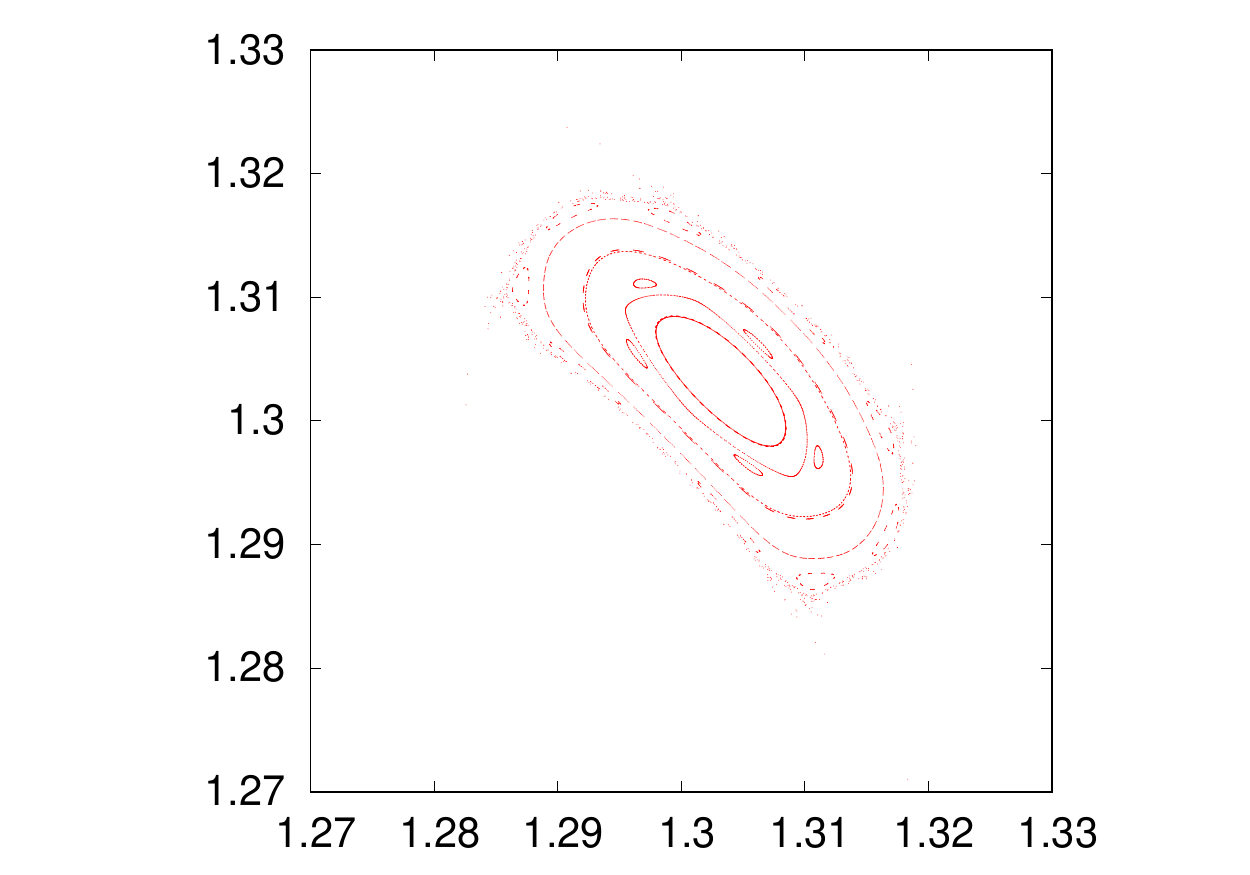} 
\includegraphics[width=0.35\textwidth]{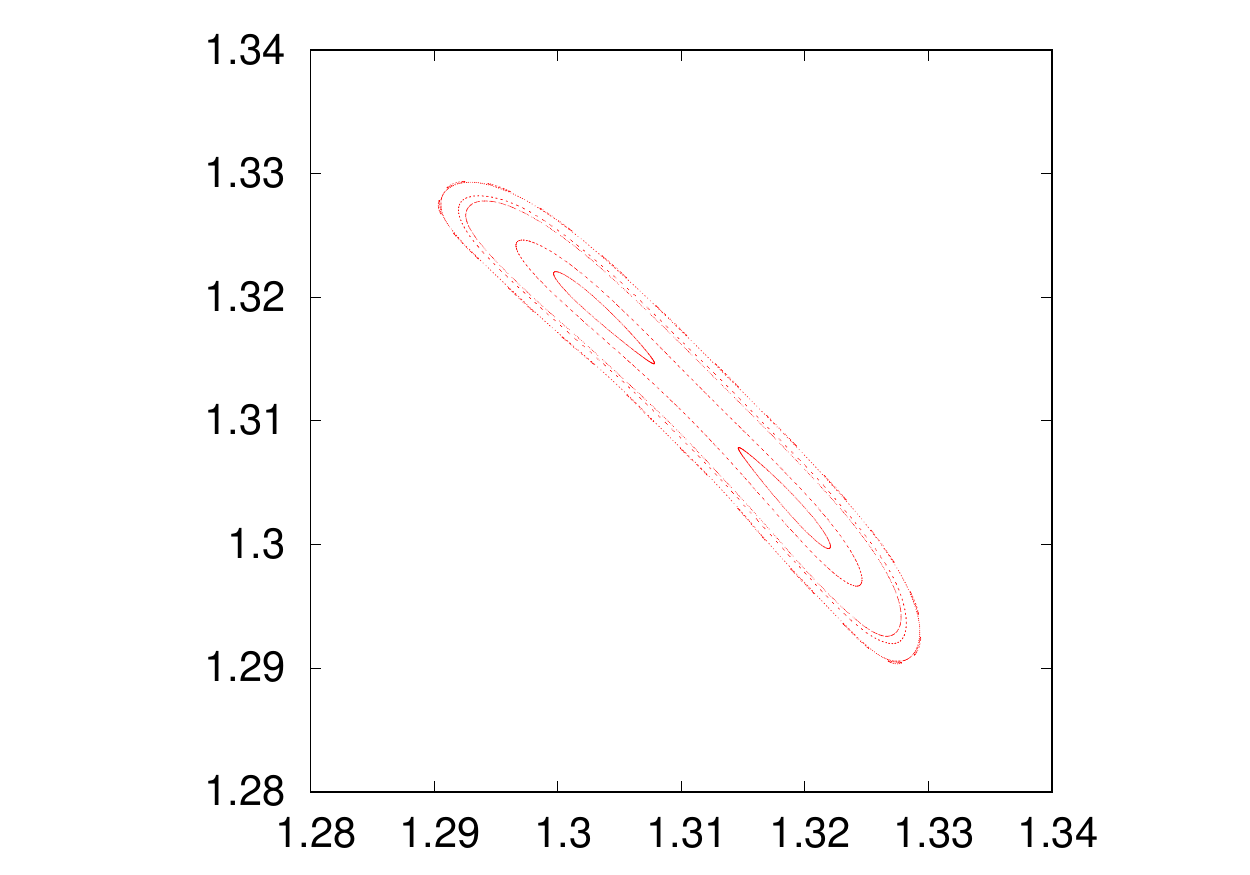} 
\caption{Sequence of bifurcations taking place on the line $M_1=0.8$. In the
top left plot, for $M_2=0.65$,  we see the 4-periodic satellite island
surrounding the main island around the fixed point.  The top center plot is a
magnification of the 4-periodic satellite island seen in the top left figure.
When increasing  $M_2$ the related elliptic 4-periodic  orbit bifurcates.  The
top right plot is for $M_2=0.748$, after the crossing of $\tilde L_4^1$. The
bottom left plot, for $M_2=1.36$, and the bottom right, for $M_2=1.38$, show
the island before and after the pitchfork bifurcation curve $L_4^3$.
  }
\label{M1_0p8}
\end{center}
\end{figure}

Curve $L_4^4$ is the curve of parabolic 4-periodic orbits with double
eigenvalue 1 which is a parabolic bifurcation curve for the
4-periodic orbits. As already said, this curve also coincides
with the curve which corresponds to a parabolic bifurcation of
the fixed point (or, when $M_1=0, M_2=2$, to a pitchfork
bifurcation of the fixed point). Let us give further details on the sequence
of bifurcations when crossing the lines $\tilde L_4^4$ and $L_4^4$ in
Fig.~\ref{fg:1p4menys_corves} right. To this end consider the vertical line
$M_1=0.0003$. When moving $M_2$ from bottom to top in
Fig.~\ref{fg:1p4menys_corves} right one has the following sequence of bifurcations:
\begin{itemize}
\item First we have the crossing of $\tilde L_4^4$. At this crossing two
non-symmetric elliptic 4-periodic orbits and two non-symmetric hyperbolic
4-periodic orbits are created as a result of an inverse period-doubling
bifurcation. The position of the two elliptic 4-periodic orbits is shown in
Fig.~\ref{0p0003} top left. A magnification of one of the satellite islands is
shown in Fig.~\ref{0p0003} top center.
\item When crossing $L_4^4$ there is a parabolic bifurcation and two
new 4-periodic orbits, one elliptic and the other of saddle type, are created.
This can be seen in Fig.~\ref{0p0003} top right.
\item Increasing $M_2$, we have two consecutive crossings of $\tilde L_4^4$.
These correspond to period-doubling bifurcations of each of the two elliptic 4-periodic
 orbits. The two satellite islands after the period-doubling are shown
in Fig.~\ref{0p0003} bottom left and right, respectively.
\end{itemize}

\begin{figure}[h]
\begin{center}
\includegraphics[width=0.35\textwidth]{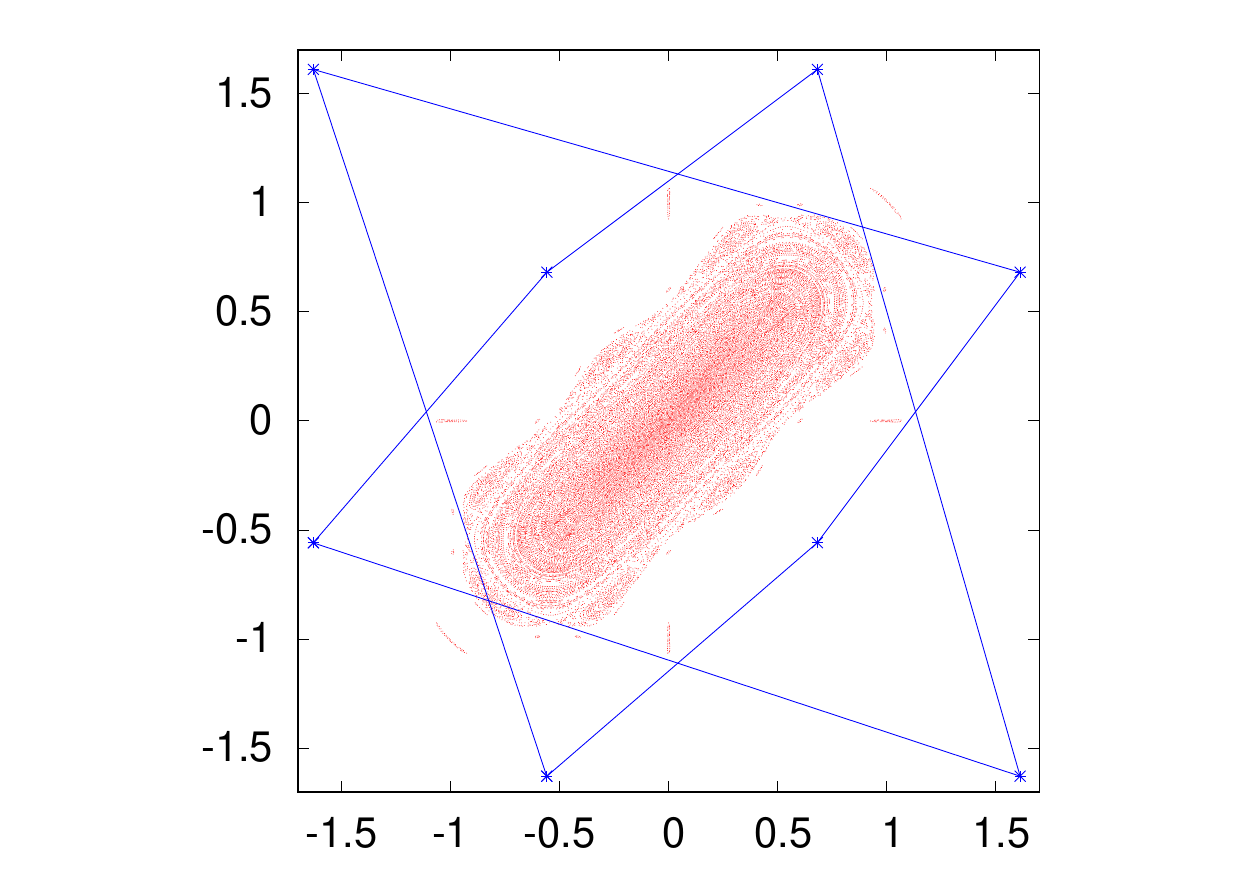} 
\hspace{-1cm}
\includegraphics[width=0.35\textwidth]{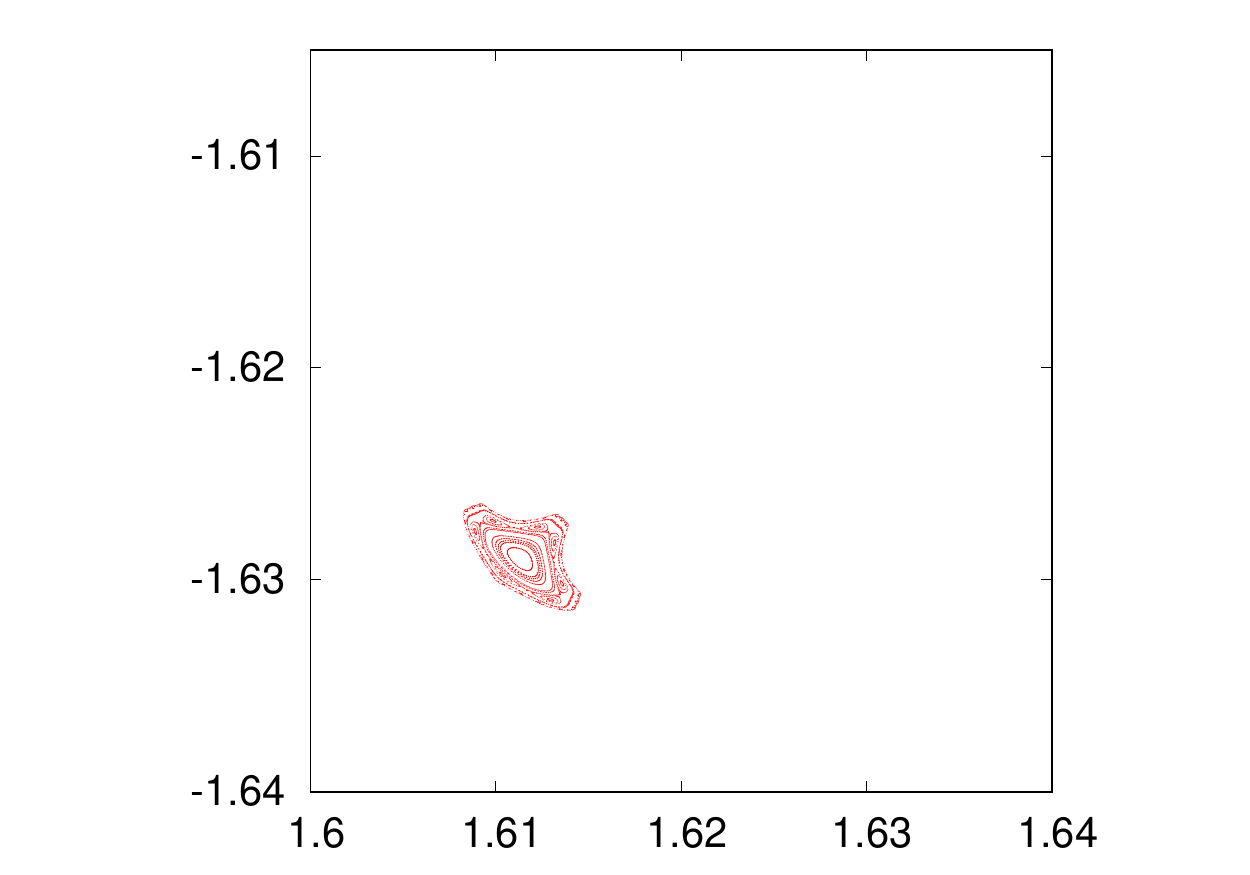} 
\hspace{-1cm}
\includegraphics[width=0.35\textwidth]{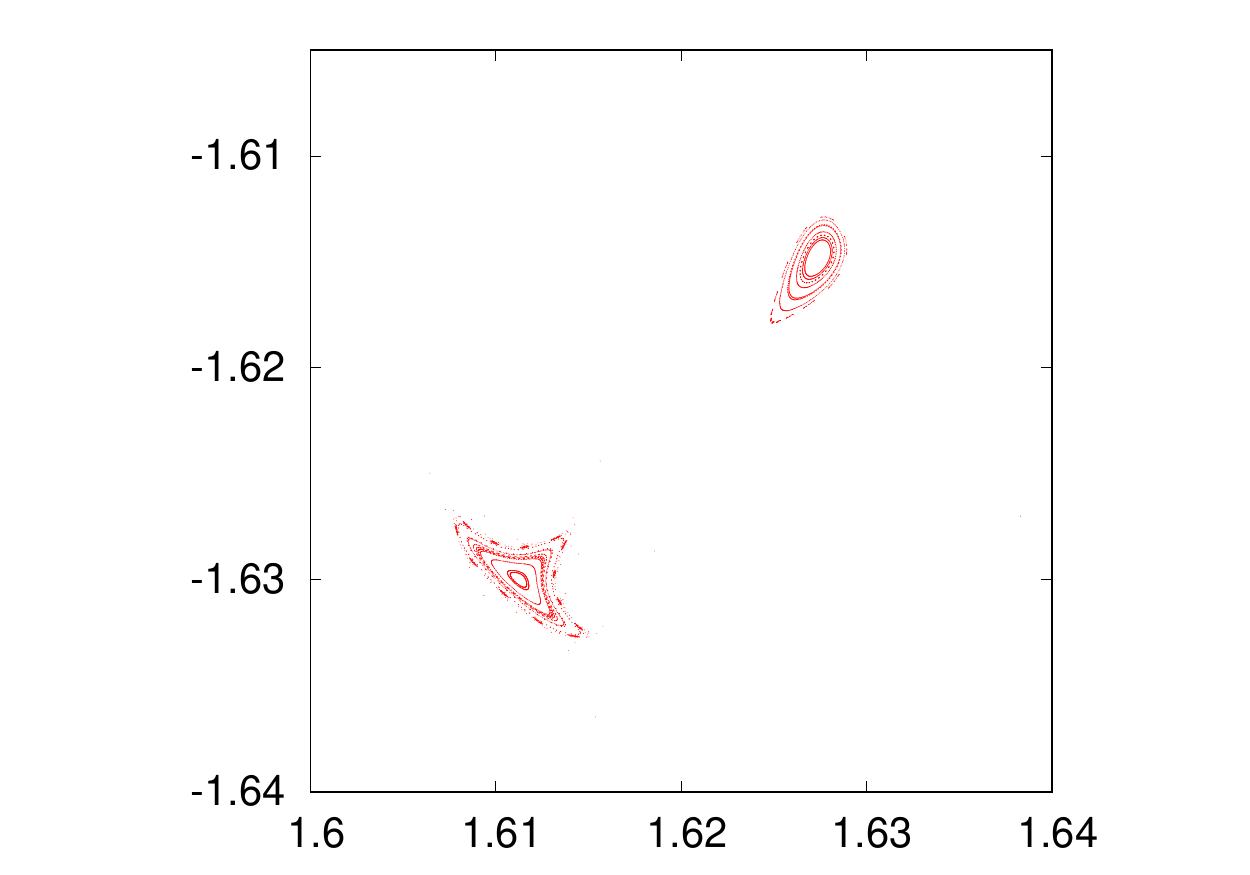}\\
\includegraphics[width=0.35\textwidth]{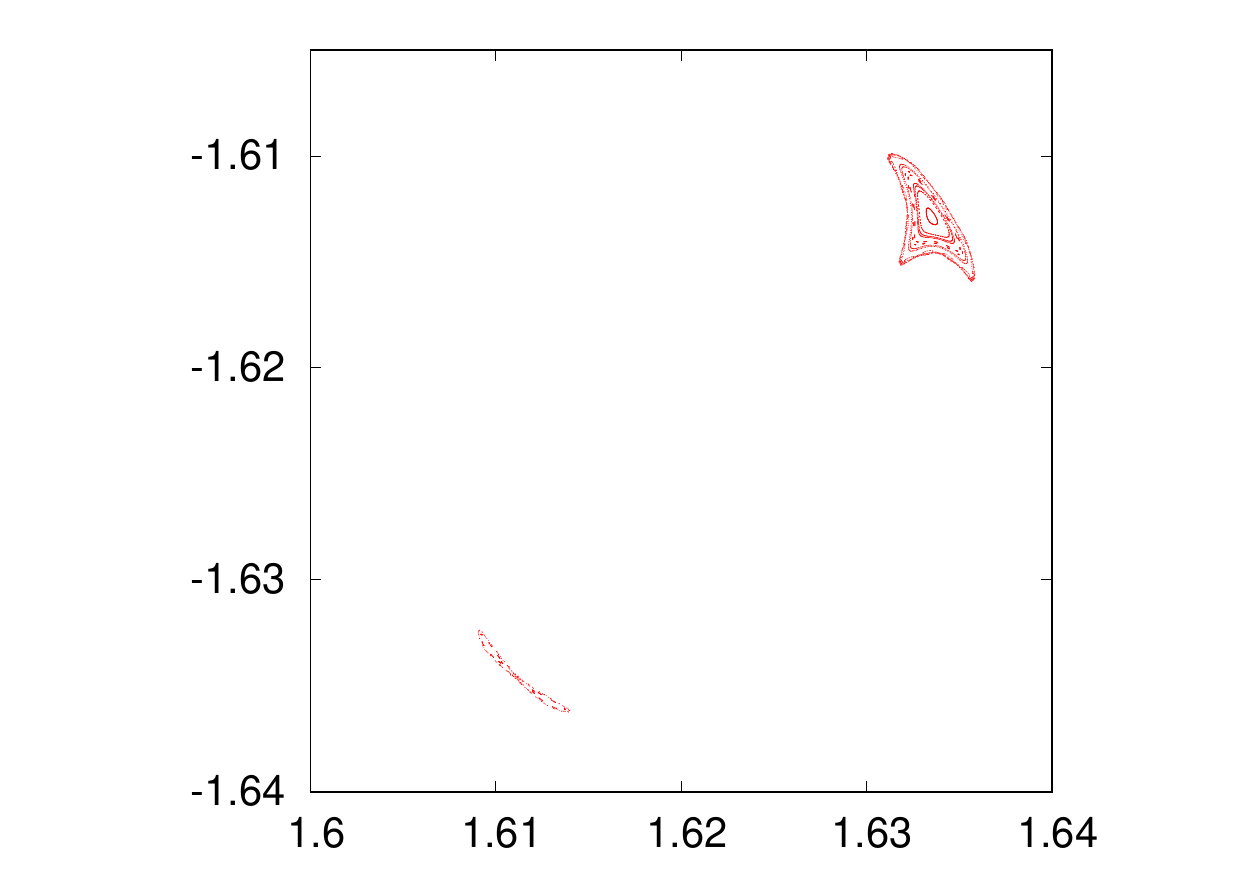} 
\includegraphics[width=0.35\textwidth]{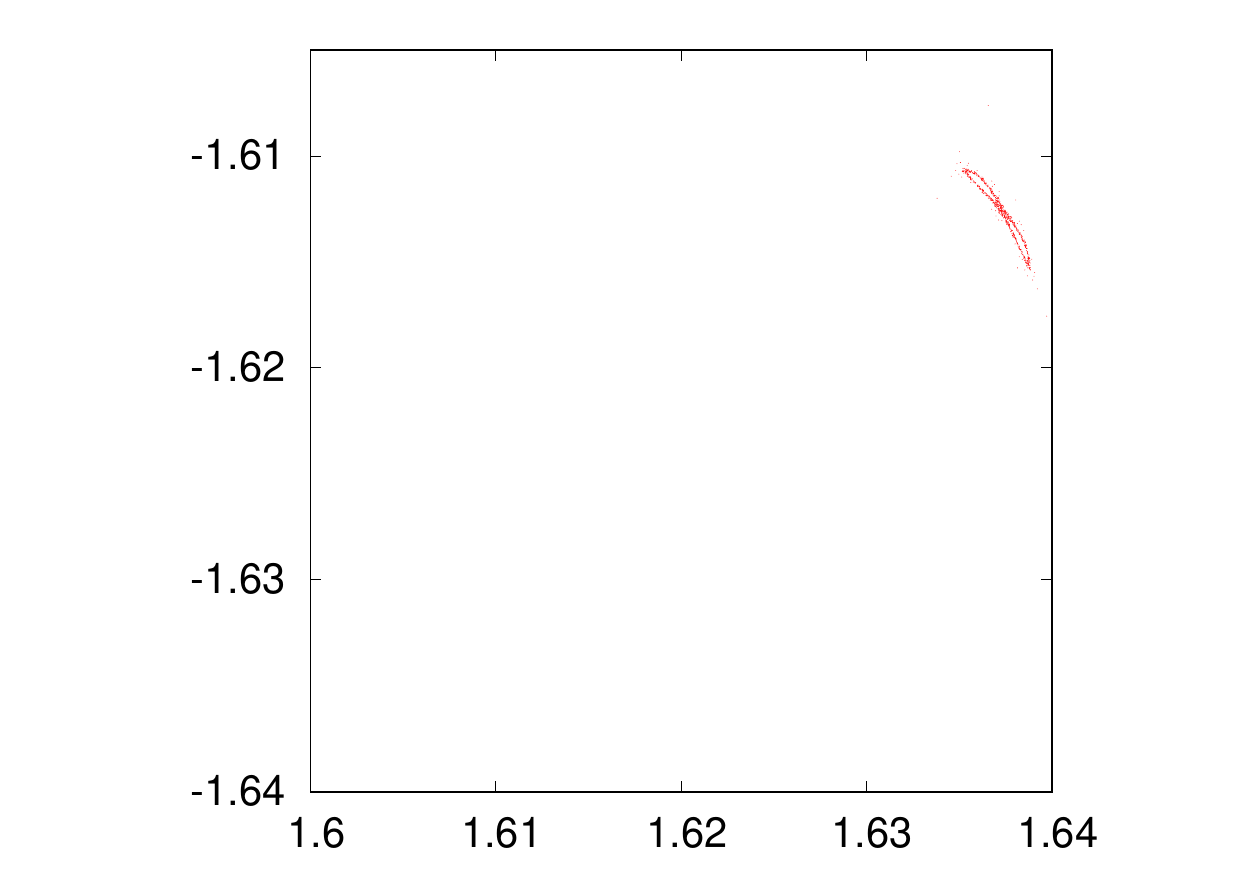} 
\caption{Sequence of bifurcations taking place on the line $M_1=0.0003$. Top
left: For $M_2=2.008$ we see the main island around the fixed point. The lines
connect the iterates of the two elliptic 4-periodic  orbits. Top center: a
magnification of the left plot showing one of the 4-periodic satellite islands.
Top right: For $M_2=2.0095$ we see the stability islands around one of the
iterates of the two elliptic 4-periodic orbits. The new island (at the top
right of the plot) is created when crossing the parabolic curve
$\tilde L_4^4$ in Fig.~\ref{fg:1p4menys_corves}.  Bottom left: For
$M_2=2.0171$ we see that one of the elliptic
4-periodic orbits undergoes a period-doubling bifurcation. Bottom right: For
$M_2=2.0232$ the remaining elliptic 4-periodic island undergoes a period-doubling
bifurcation.
  }
\label{0p0003}
\end{center}
\end{figure}

\section{The 1:4 resonance in map~$\mathbf{C}_+$.}\label{sec:map_pl} 

Map~$\mathbf{C}_+$ has the fixed point $P^{-}_{\pi/2} = (-\sqrt{-M_2/3},
-\sqrt{-M_2/3})$ (resp. $P^+_{\pi/2}=-P^-_{\pi/2}$)  with eigenvalues $\pm i$ for parameters
$(M_1,M_2)$ with $M_1>0$ (resp. $M_1<0$) on the 1:4 resonance curve
$$
L^+_{\pi/2}\;:\; M_1^2 = -\frac{4}{27}M_2 (M_2 -3)^2.
$$

The coefficients of normal form (\ref{HeComplNew13}) around $P^{\pm}_{\pi/2}$
 are $\displaystyle 8B_{21}(0) = 3 - 3 M_2, 8B_{03}(0)= 1 + 3 M_2$ \cite{MGon05}, hence 
$$
\displaystyle A   
= \frac{|3 - 3 M_2|}{|1 + 3 M_2|} = \left\{ \begin{array}{ll}
\displaystyle 1+ \frac{2-6M_2}{1+3M_2}, & \displaystyle -\frac{1}{3} < M_2 \leq 0, 
\vspace{0.2cm} \\
\displaystyle 1+ \frac{4}{|1+3 M_2|}, & \displaystyle M_2<-\frac{1}{3}.
\end{array}\right.
$$
Since $M_2\leq 0$ in~$L_{\pi/2}^+$, we always have $A>1$.  However, we get 
degeneracy $B_{03}(0) = 0$  at~$M_1=\pm 20/27$ and~$M_2 = -1/3$. These are the
points $P_4^l$ and $P_4^r$ in the bifurcation diagram shown in
Fig.~\ref{fig:res1_4p} left where we display in red the bifurcation curves
for which there are parabolic 4-periodic orbits with double eigenvalue 1 (either
parabolic or pitchfork bifurcations) and in green those curves for
which there are 4-periodic orbits with double eigenvalue -1 (hence
period-doubling bifurcations).

For parameters $(M_1,M_2)$ above the curve $L^+_{\pi/2}$ the local phase space
is topologically equivalent to that of region I in Fig.~\ref{fig_pi2-A1},
that is the fixed point $P^{\pm}_{\pi/2}$ is elliptic and there are no
4-periodic orbits surrounding it.

Crossing the curve $L^+_{\pi/2}$ through $(M_1,M_2) \in L^+_{\pi/2} \setminus
\{P_4^{r,l}\}$ is analogous to cross the line $L_2$ in Fig.~\ref{fig_pi2-A1}
from region I to II.  In particular, around $P^{\pm}_{\pi/2}$ there appear
a saddle 4-periodic orbit whose invariant manifolds bound a 4-periodic island
of stability with an  elliptic 4-periodic orbit inside.
 
\begin{figure}[tb]


\hspace{0.3cm}
\includegraphics[]{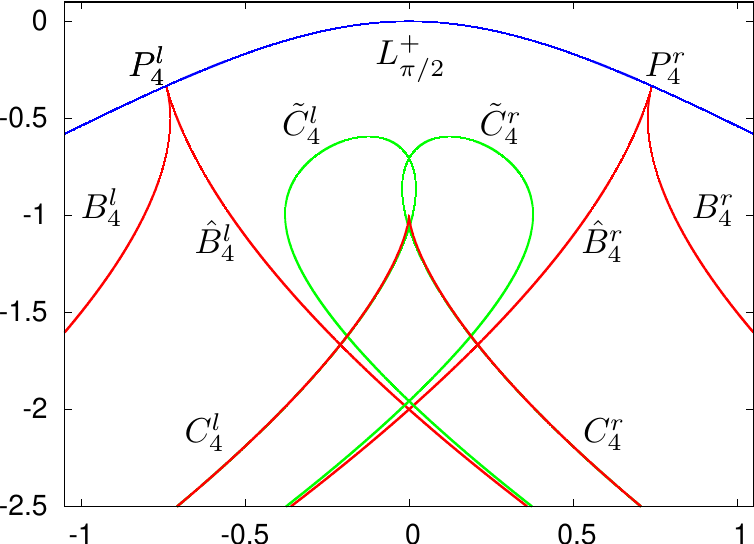}
\hspace{1cm}
\includegraphics[]{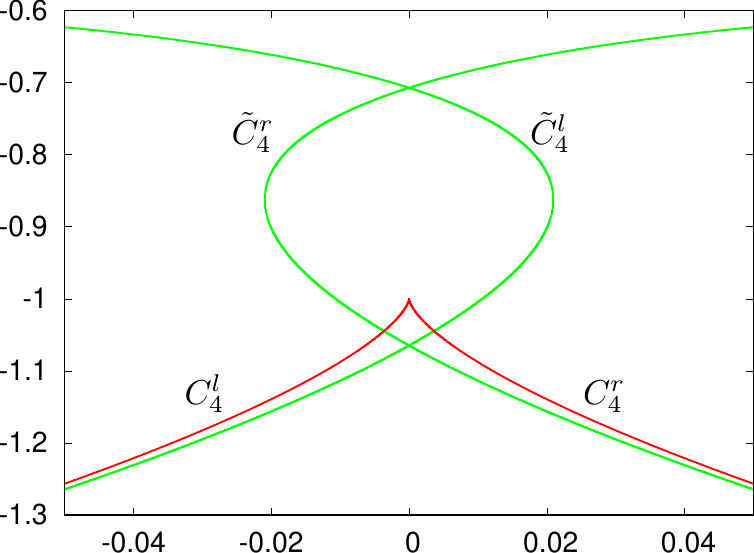}
\caption{Bifurcation curves in map $\mathbf{C}_+$. The right plot is a magnification of the left one
near $(0,-1)$.}
	\label{fig:res1_4p}
\end{figure}

The points $P_4^{l,r}$, corresponding to degeneracy $B_{03}=0$, are 
endpoints of the pitchfork bifurcation curves $B^l_4$, $\hat B^l_4$ and $B^r_4$,
$\hat B^r_4$. The explicit equations of such curves are
\begin{equation} \label{eqBlrhBlr}
\begin{array}{rrcl}
B^{l,r}_4:   & \displaystyle 27 M_1^2 & \! = \! & \left( 3 \sqrt{3} - 3 \sqrt{-M_2} - 2 M_2 \sqrt{-M_2}\right)^2, \;\; M_2 < -1/3. \vspace{0.2cm} \\
\hat B^{l,r}_4: & \displaystyle  27 M_1^2 & \! = \! &  4 (2 + M_2)^2 (1 - M_2), \quad M_2 < -1/3. \vspace{0.3cm} 
\end{array}
\end{equation}

In Fig.~\ref{fig:res14_p_M20p5} we display the sequence of bifurcations
taking place when getting inside/outside the region bounded by curves
$\hat B^r_4$ and $B^r_4$ (by symmetry, bifurcations through $B^l_4$ and
$\hat B^l_4$ are analogous).  For illustrations we choose $M_2=-0.5$
and change $M_1$. For $M_1=0.7$ (top left plot in
Fig.~\ref{fig:res14_p_M20p5}) we see the 4-periodic island having the
elliptic point on the symmetry line $y=x$. For parameters on $\hat B^r_4$ this
point undergoes a pitchfork bifurcation: the elliptic 4-periodic orbit becomes a
saddle 4-periodic orbit and a pair of elliptic 4-periodic orbits appear nearby,
see Fig.~\ref{fig:res14_p_M20p5} top center.  Then the separatrices of the
saddle 4-periodic orbits become larger, see  Fig.~\ref{fig:res14_p_M20p5} top
right, and at some moment between $M_1=0.718$ and $M_2=0.719$ the separatrices
merge\footnote{Note that for map $\mathbf{C}_+$ the invariant manifolds are
not expected to exactly merge due to the splitting of separatrices.} with
the exterior separatrices of the other saddle 4-periodic orbits and a
sequence of bifurcations related to the reconstruction of homo/heteroclinic connections
takes place, see  Fig.~\ref{fig:res14_p_M20p5} bottom left.  After that, an
inverse pitchfork bifurcation occurs, the two elliptic 4-periodic collide into
the saddle 4-periodic orbit which becomes elliptic, see
Fig.~\ref{fig:res14_p_M20p5} bottom center and right.  Note that for
$M_1=0.73$ the saddle 4-periodic orbit is on the symmetry line $y=x$. 
This sequence of bifurcations (pitchfork, reconnection of the invariant
manifolds and inverse pitchfork) happens generically at the unfolding of the
degenerate case $B_{03}=0$, see details in Appendix~\ref{appendix1p4deg}.

\begin{figure}[tb]
	\centering
        \hspace{-0.5cm}
	\includegraphics[height=5cm]{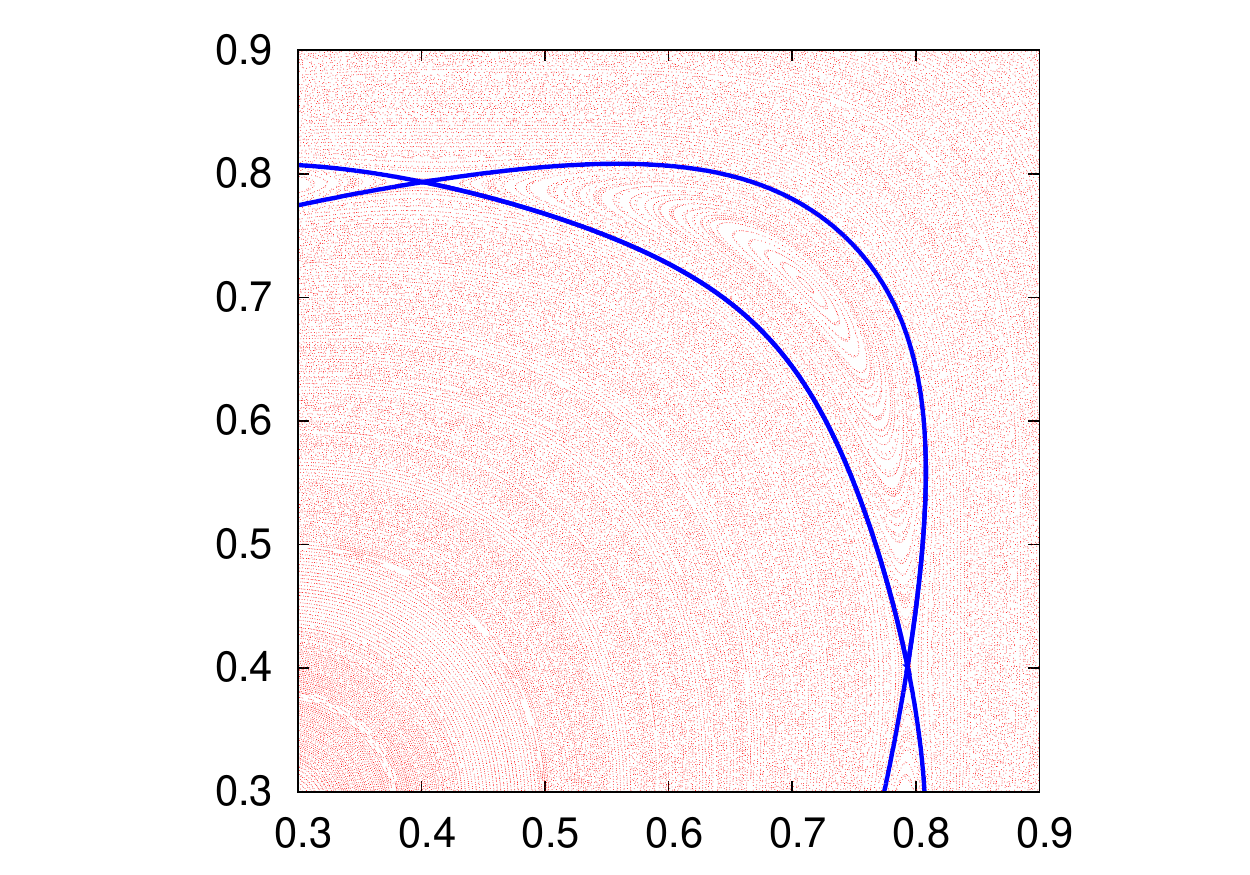}
        \hspace{-2.2cm}
	\includegraphics[height=5cm]{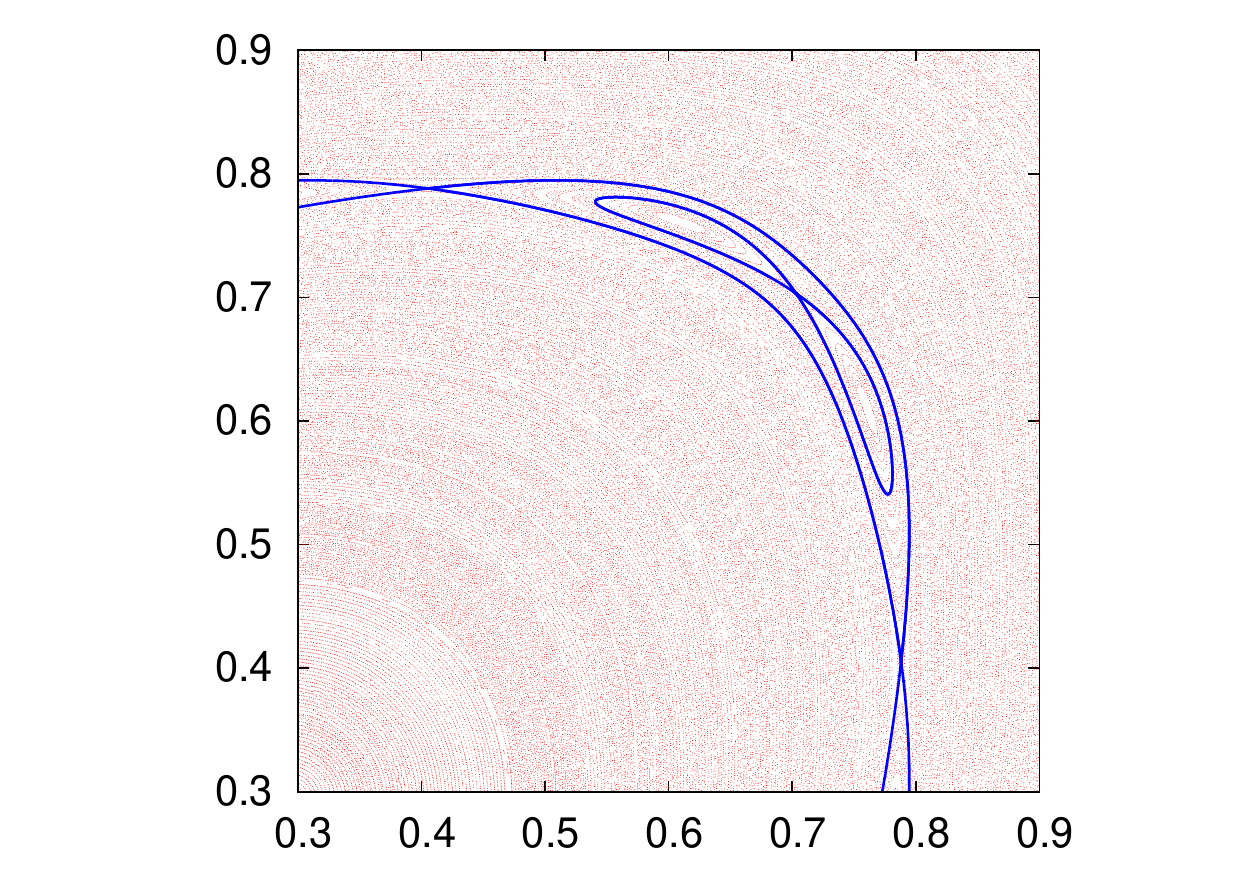}
        \hspace{-2.2cm}
	\includegraphics[height=5cm]{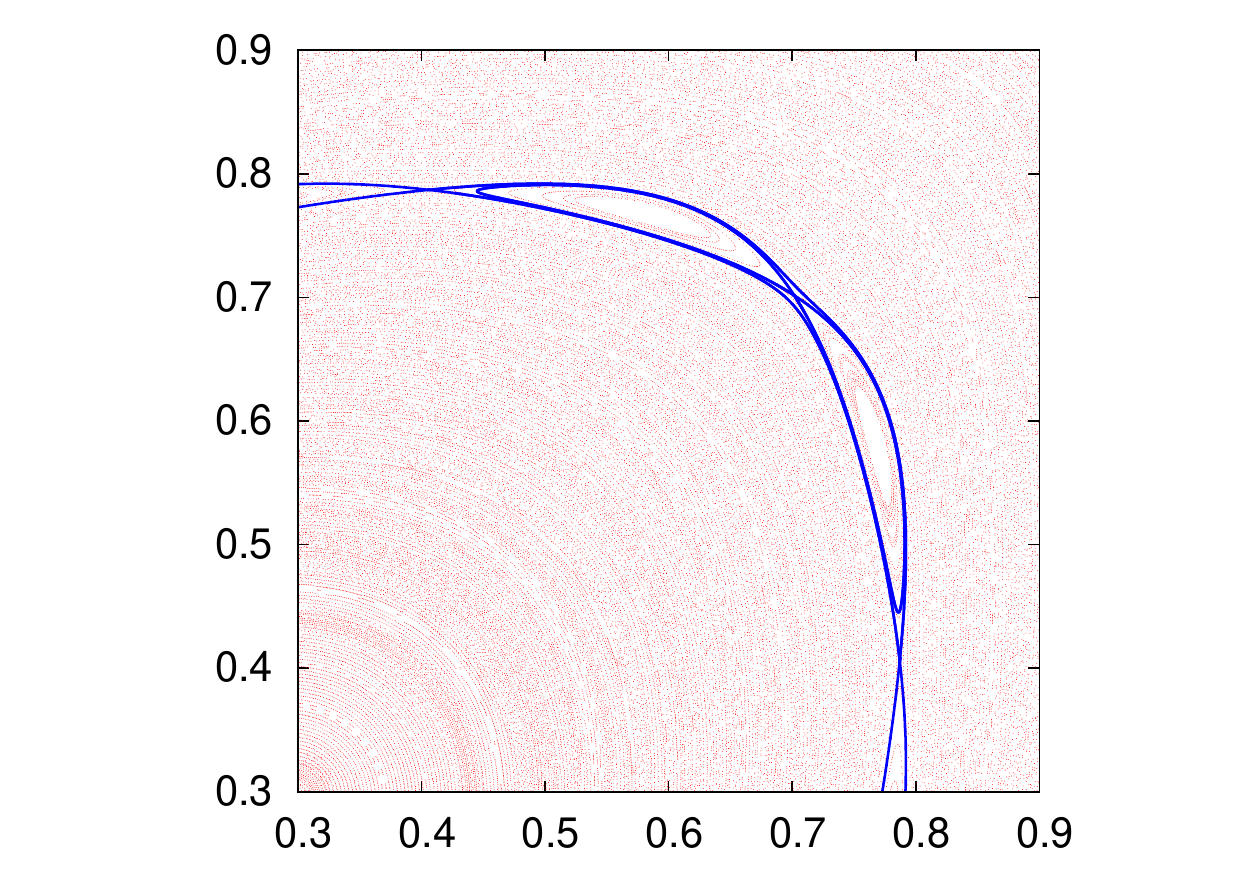} \\ 
        \hspace{-0.5cm}
	\includegraphics[height=5cm]{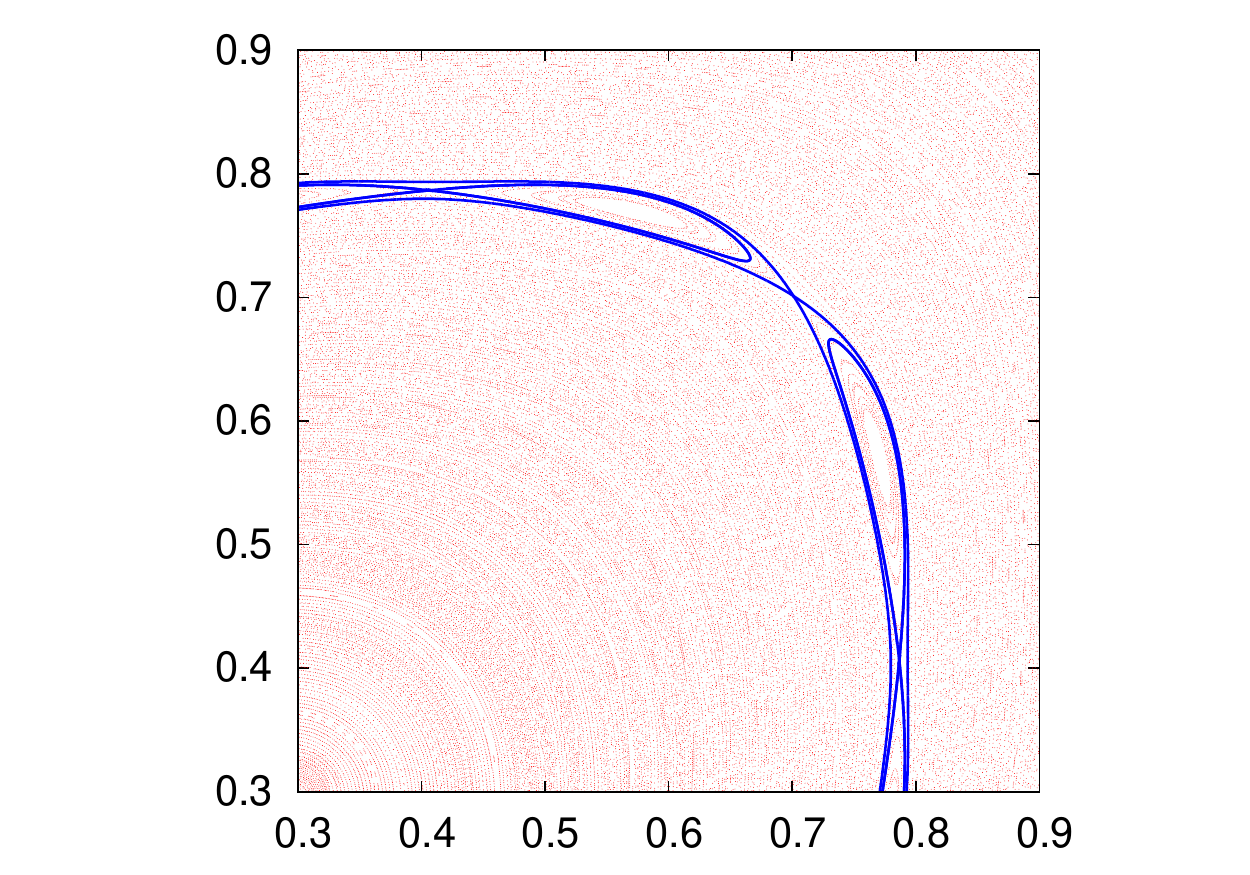} 
        \hspace{-2.2cm}
	\includegraphics[height=5cm]{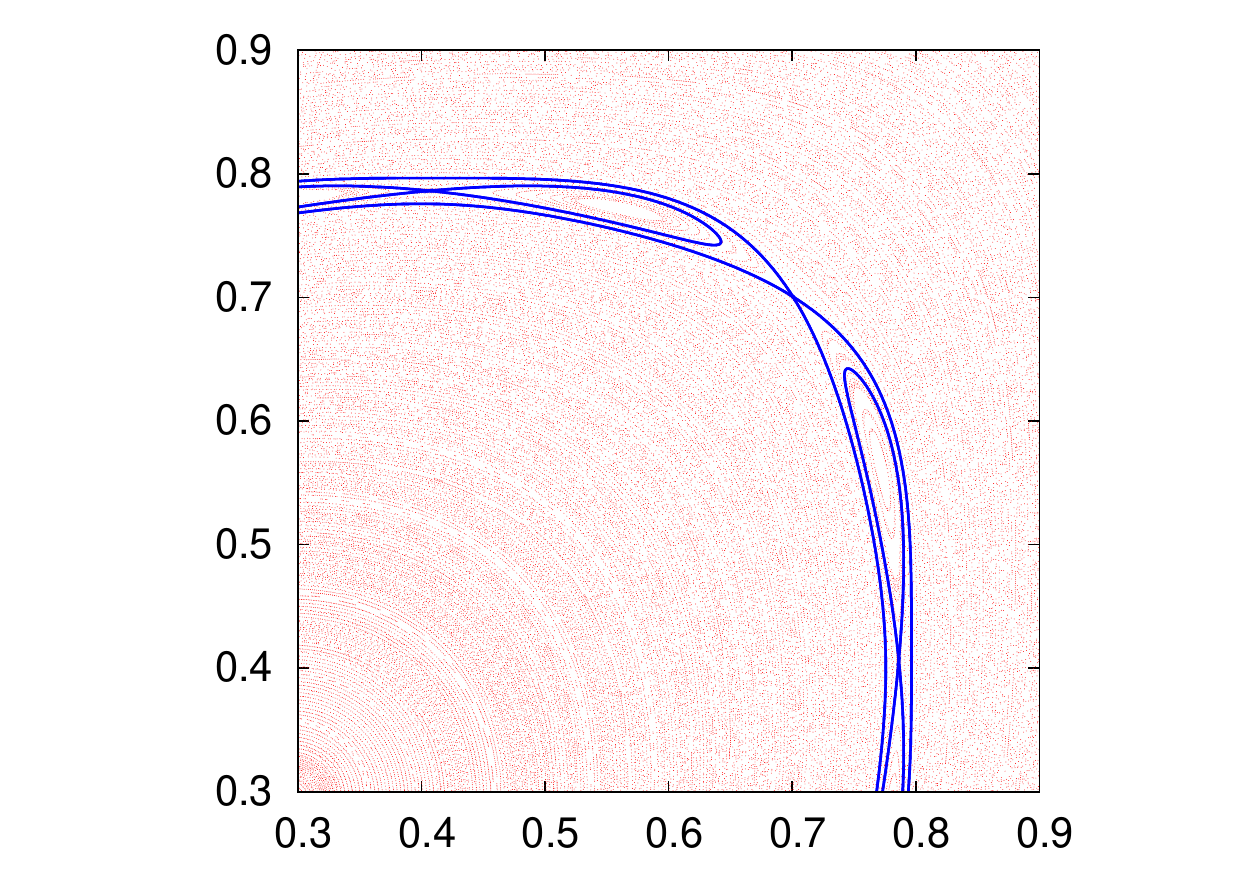}
        \hspace{-2.2cm}
	\includegraphics[height=5cm]{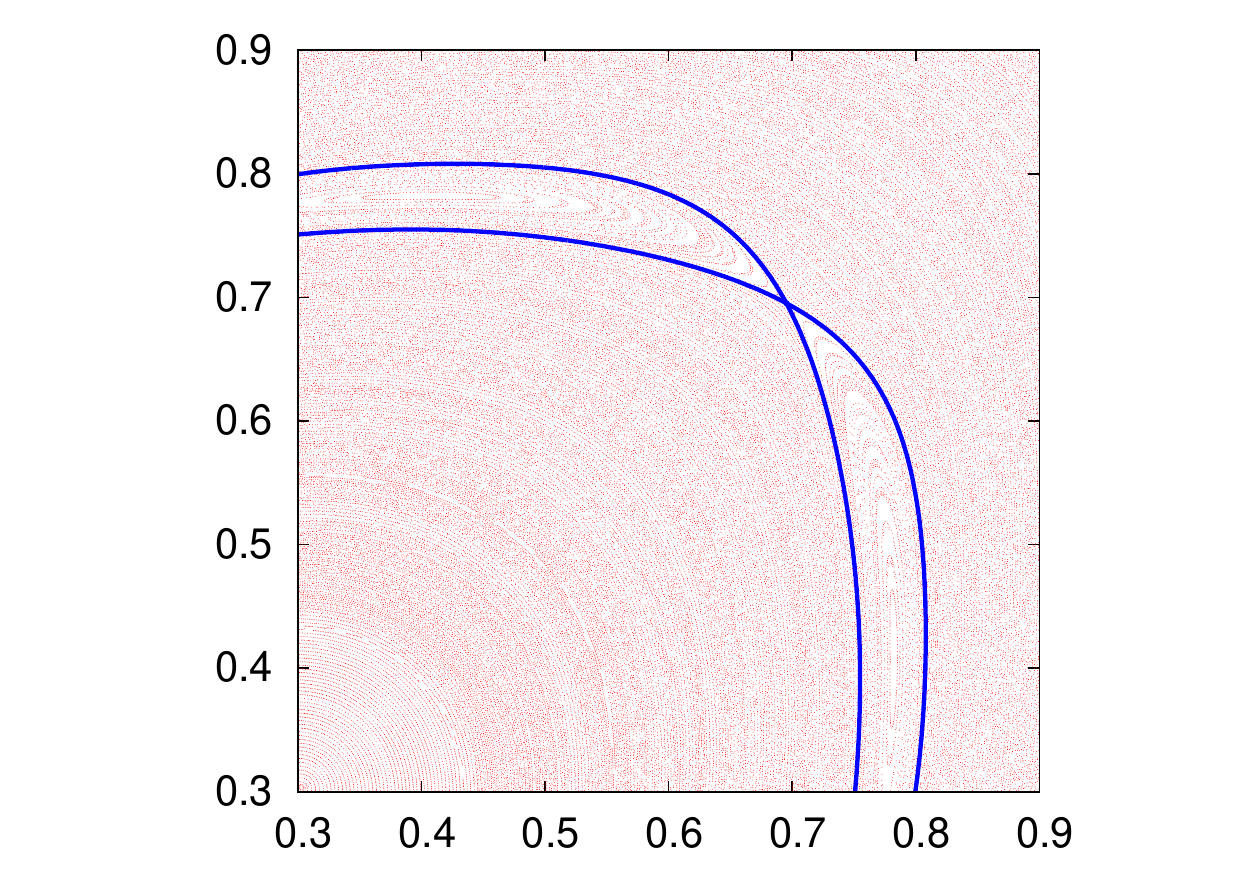}
	\caption{Sequence of bifurcations for fixed $M_2=-0.5$ in map
$\mathbf{C}_+$ when crossing $\hat B^r_4$ and $B^r_4$. The values of $M_1$
are: $0.7$, $0.715$, $0.718$ (top), $0.719$, $0.72$, $0.73$
(bottom).}
	\label{fig:res14_p_M20p5}
\end{figure}

\subsection{Other bifurcation curves related to 4-periodic orbits}

We have considered some of the local bifurcations of 4-periodic orbits
emanating from the 1:4 resonance. The evolution in phase space of the
associated 4-periodic orbits leads to other bifurcation curves of other
4-periodic orbits which interact with the obtained ones. In this section we aim
to illustrate some aspects of their configuration in parameter space. 

\begin{remark}
The curves $B_{4}^{l,r}$ and $\hat{B}_4^{l,r}$ are related to the local aspects
of the 1:4 resonance. On the other hand, the bifurcations curves considered in
this section are not related to the (local) 1:4 resonance problem. 
\end{remark}

Some of the bifurcation curves to be considered already appear in
Fig.~\ref{fig:res1_4p}.  The curves $C^{r,l}_4$ correspond to trace$=2$ (double eigenvalue 1),
given by
\begin{equation} \label{eqClr}
C^{r,l}_4:  \displaystyle 27 M_1^2 = -4 (1 + M_2)^3, \quad M_2 \le -1,
\end{equation}
and the curves $\tilde C^{r,l}_4$ to trace$=-2$ (double eigenvalue $-1$).
To explain the bifurcations that take place let us consider a horizontal line
$M_2=C$, with $C_1<C<C_2$, where $C_1 \approx -1.6220$ is the $M_2$-coordinate
of the intersection point between $C_4^r$ and $\tilde C^r_4$ (there is only one 
intersection point, shown in Fig.~\ref{fig:res1_4p} left) and $C_2\approx -1.0647$ is the
$M_2$-coordinate of the bottom intersection point between $\tilde C^{r}_4$ and
$\tilde C^{l}_4$ shown in the Fig.~\ref{fig:res1_4p} right.  When varying $M_1$ from right
to left in Fig.~\ref{fig:res1_4p} one has the following bifurcation
sequence:
\begin{itemize}
\item For $M_1$ in the right hand side of $\hat B^r_4$ there are saddle
4-periodic orbits along with a pair of elliptic 4-periodic orbits.  Recall that
curve $\hat B^r_4$ corresponds to a pitchfork bifurcation, hence to the
left of this curve we get a 4-periodic island of stability creating a garland
containing saddle and elliptic 4-periodic orbits. 

\item When decreasing $M_1$, an inverse period-doubling bifurcation occurs at
the first crossing with curve $\tilde C^r_4$ in Fig.~\ref{fig:res1_4p}
left. The bifurcation is as follows: for parameters to the right of $\tilde
C^r_4$, in a neighborhood of the elliptic 4-periodic orbit, there appears a
saddle 8-periodic orbit. This 8-periodic orbit bifurcates from the  saddle
4-periodic orbit that remains to the left of $\tilde C^r_4$.

\item For parameters on the curve $C^r_4$, a parabolic
bifurcation for a 4-periodic orbit takes place. At this bifurcation 
a saddle and an elliptic 4-periodic orbits are created. One of the pairs of the
elliptic and hyperbolic 4-periodic orbits that bifurcate lie on the symmetry
line $y=x$. The elliptic orbit undergoes a period-doubling bifurcation when
crossing the curve $\tilde C^r_4$. See Fig.~\ref{fig:res1_4p} right.
\end{itemize}

We have found other bifurcation curves related to 4-periodic orbits. We note
that the corresponding 4-periodic orbits do not lie on the symmetry line $y=x$
of map $\mathbf{C}_+$.  The bifurcation curves are shown in
Fig.~\ref{fig:res1_4p_down} left.
Note that, in the left plot, curves $D_4^{l,r}$, which wrap the full structure
of bifurcation curves shown, are confined in the region below the curve
$L^+_{\pi/2}$ partially shown in Fig.~\ref{fig:res1_4p}. Let us give some
details on the bifurcation curves obtained.

\begin{itemize}
\item The curves $D_4^{l,r}$ are given by 
\[D_4^{l,r}: \pm M_1 = \sqrt{\frac{ - M_2}{3}} \left( \frac{2}{3} M_2 +1 \right) \pm \sqrt{ - 3 M_2 -8}, \] 
and they correspond to period-doubling bifurcation curves of 2-periodic orbits.
The bifurcation curves related to 2-periodic orbits were studied in
\cite{GGO17}, in particular, curves $D_4^{l,r}$ were obtained there (they
correspond to the ones denoted by $L_2^{-1}$ and $L_2^{-2}$ in \cite{GGO17}).  

\item The bifurcation curves $\hat D_4^{l,r}$ correspond to a parabolic bifurcation of 
4-periodic orbits. The curve $\hat D_4^{l}$ ends up in a point $(M_1,M_2)=
(M_1^*,M_2^*) \approx (0.041064, -2.944529)$ where it becomes tangent to the
curve $D_4^{r}$.  See details in Fig.~\ref{fig:res1_4p_down} right.  When
$(M_1,M_2) \in \hat D_4^{l}$ tends to $(M_1^*,M_2^*)$ the 4-periodic orbit
approaches the 2-periodic orbit that undergoes the period-doubling bifurcation
in $D_4^{r}$.

\item The bifurcation curves $\bar D_4^{l,r}$, whose equations are
\begin{equation} \label{eqbD4lr}
\bar{D}_4^{l,r}:  27 M_1^2  \! = \!  \left(  3 \sqrt{3} + 3 \sqrt{-M_2} + 2 M_2 \sqrt{-M_2}\right)^2, \;\; M_2 \leq -3, 
\end{equation}
correspond to a pitchfork bifurcation of 4-periodic orbits. The curve $\bar
D_4^l$ ends up at the point $(M_1,M_2)=(0,-3)$ where curves $D_4^{l}$ and
$D_4^{r}$ intersect.

\item Finally, the bifurcation curves $\tilde D_4^{l,r}$ correspond to
4-periodic orbits with double eigenvalue $-1$, hence they are period-doubling
bifurcation curves of 4-periodic orbits.
\end{itemize}

\begin{figure}[tb]

\hspace{0.3cm}
\includegraphics[]{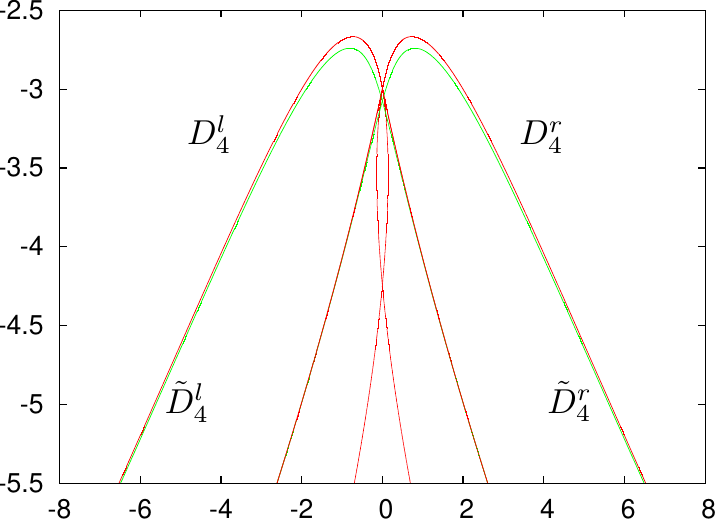}
\hspace{1cm}
\includegraphics[]{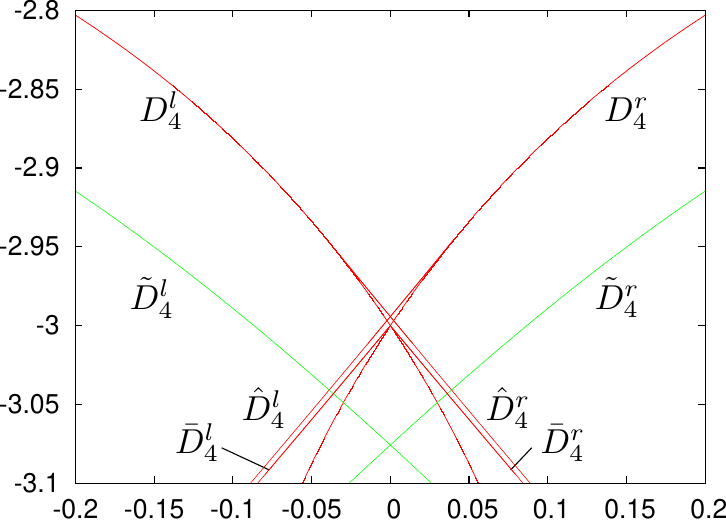}
\caption{Other bifurcation curves of 4-periodic orbits in map
$\mathbf{C}_+$. In the left plot we see the two (green) curves $\tilde D_4^{l,r}$ that correspond to
parabolic 4-periodic orbits with double eigenvalue $-1$. The curves
$D_4^{l,r}$ correspond to period-doubling of 2-periodic orbits.  The other
curves are better seen in the right plot, which is a magnification of the left
one. In the right plot we see that curves $\hat D_4^{l,r}$, corresponding
to parabolic 4-periodic with double eigenvalue $1$ respectively, become tangent
to $D_4^{l,r}$. Finally, curves $\bar D_4^{l,r}$, which correspond to a
pitchfork bifurcation of 4-periodic orbits, end up at $(M_1,M_2)=(0,-3)$.} 
\label{fig:res1_4p_down}
\end{figure}

To provide further details of the sequence of bifurcations that take place we consider
the horizontal line $M_2=-3.1$ and, on this line, different values of $M_1<0$.
We refer to Fig.~\ref{fig:res1_4p_down} right to see the location of the
different bifurcation curves. To start with we consider $M_1 = -0.047$, which
is between the curves $D_4^{r}$ and $\tilde D_4^{r}$. For parameter values
$(M_1,M_2)=(-0.047,-3.1)$ the phase space shows up a 2-periodic island and a
4-periodic island (which is a 2-periodic satellite of the 2-periodic island).
Denote by $e_2$ (resp. by $e_4$) the elliptic points around which the
2-periodic (resp. the 4-periodic) islands of stability are organized. The
2-periodic elliptic island, for $M_2=-3.1$, persists\footnote{If one considers
$M_2<4$, the 2-periodic elliptic point persists when moving $M_1$ from right to
left until crossing a parabolic bifurcation curve of 2-periodic orbits. This
curve was obtained in \cite{GGO17} and was denoted there by $L_2^+$.} for
$-0.047< M_1 \leq 0$. When changing $M_1$ on the line $M_2=-3.1$, the following
bifurcations are observed:
\begin{itemize}
\item At the crossing of $\tilde D_4^{r}$ from left to right, the point $e_4$
undergoes a period-doubling bifurcation, and an 8-periodic elliptic point is
created.
\item When crossing $D_4^r$ from right to left, the point $e_2$ undergoes a
period-doubling. Consequently, a 4-periodic elliptic orbit is born, denote it
by $\tilde{e}_4$.
\item At the crossing of $\bar{D}_4^l$ from right to left, there is an inverse
pitchfork bifurcation at which the two 4-periodic elliptic points $e_4$ and
$\tilde{e}_4$ collide and give rise to a 4-periodic elliptic orbit. This
4-periodic elliptic orbit persists until the crossing of $\hat{D}_4^l$ where
disappears at a parabolic bifurcation.
\end{itemize}

Other sequences of bifurcations can be observed for other lines $M_2=C$ (specially
when considering $M_2$ in the range shown in Fig.~\ref{fig:res1_4p_down} right).
For example, for $M_2 = -2.9$ and moving $M_1$ from right to left starting from $M_1=0$, 
one has that the 2-periodic elliptic orbit
undergoes a period-doubling bifurcation at $D_4^l$ (roughly for $M_1 \approx
-0.1$). The 4-periodic elliptic orbit that bifurcates from the previous
bifurcation undergoes a period-doubling at $\tilde D_4^l$ (which happens for
$M_1 < -0.2$). 

As a final comment, we note that the bifurcation curves considered before allow
us to explain the sequences of bifurcations of 4-periodic orbits that we have
observed when plotting the islands of stability for different values of
$(M_1,M_2)$. Hence, we believe that the bifurcation diagram for the 4-periodic
orbits in the $(M_1,M_2)$ regions shown is complete (although we have no proof
of this fact).

\section{Conclusions and related topics} \label{sec:conclusions}

We have obtained a detailed picture of the bifurcation diagrams near the 1:4
resonance of maps $\mathbf{C}_{\pm}$ in (\ref{cubH1m})-(\ref{cubH1pl}). A
description of the bifurcations taking place when crossing the main bifurcation
curves derived (either analytically or numerically) has been provided.
Special emphasis has been put to the clarification of the scenarios related to the
degeneracies of the 1:4 normal form. We have shown that degeneracy $A=1$ happens
for $\mathbf{C}_-$ while degeneracy $B_{03}=0$ happens for $\mathbf{C}_+$,
and we have analysed the dynamical consequences of these degeneracies in these
concrete cases.

We believe that the results presented in this work are relevant for related
studies. Namely, we want to emphasize that the study of the cubic H\'enon
maps (\ref{cubH1m}) and (\ref{cubH1pl}) (and naturally the quadratic one
(\ref{H2st})) is important because 
\begin{itemize}
\item these maps are the simplest nonlinear symplectic maps of the plane, and
therefore the understanding of the basic properties of the dynamics and the bifurcations
in such maps will be very useful in more general contexts.
\item as pointed out in the introduction, these maps are, in fact, normal forms
of the first return maps near the quadratic (map (\ref{H2st})) and cubic (maps
(\ref{cubH1m}) and (\ref{cubH1pl})) homoclinic tangencies:
it is easy to relate the structure of the bifurcations of these maps
with the global bifurcations happening at the homoclinic tangency.
\end{itemize}

However, there is another important reason why the 1:4 resonance in the cubic
H\'enon maps is of interest. It is connected with mixed dynamics -- a new
third type of dynamical chaos characterized by the principal inseparability of
attractors from repellers\footnote{Here attractor and repeller are considered in the
Conley-Ruelle sense \cite{Con78, Rue81}, see also \cite{GT17}.} and from the 
conservative elements of dynamics (for example, periodic sinks, sources and
elliptic points), see e.g.  \cite{GST97,LS04,DGGLS13,G16,GT17}. It is worth
noting that the mixed dynamics
can be an open property of reversible non-conservative chaotic systems in which
self-symmetric orbits are conservative (e.g., symmetric elliptic trajectories),
while asymmetric ones appear in pairs and have the opposite type of stability
(e.g., ``sink-source'' pairs).  Such symmetric/asymmetric orbits emerge usually
as a result of various homoclinic bifurcations, see more details in
\cite{DGGLS13}, including local symmetry breaking  bifurcations like reversible
pitchfork ones \cite{LT12}. However, the structure of such
bifurcations in many cases is not known, as happens for example in the case of
symmetric cubic homoclinic tangencies. It can be deduced from \cite{GGO17} that
the first return map near a symmetric cubic homoclinic tangency must coincide
in the main order with the cubic H\'enon map either of form (\ref{cubH1m}) or
(\ref{cubH1pl}) which are reversible maps. When studying the problem of 1:4
resonance in these maps, we have shown that pitchfork bifurcations appear
accompanying the resonance local bifurcation.  These bifurcations should lead
to the birth of a ``sink-source'' pair of periodic orbits in general reversible
contexts.  

We believe that these topics certainly deserve future devoted studies and we
hope that the results presented here will contribute to facilitate them.

\section*{Acknowledgments}

The paper is carried at the financial support of the RSF grant 14-41-00044.  AV
and MG has been supported by the Spanish grant MTM2016-80117-P (MINECO/FEDER,
UE). AV also thanks the Catalan grant 2014-SGR-1145.  MG also thanks the Juan
de la Cierva-Formaci\'on Fellowship FJCI-2014-21229 and the MICIIN/FEDER grant
MTM2015-65715-P. 
SG thanks the Russian Foundation for Basic Research, grant
16-01-00364, and the Russian Ministry of Science and Education, project
1.3287.2017 -- target part, for supporting the scientific research.
IO points out that this paper is a contribution to the project M2 (Systematic
multiscale modelling and analysis for geophysical flow) of the Collaborative
Research Centre TRR 181 ``Energy Transfer in Atmosphere and Ocean'' funded by the
German Research Foundation.

\appendix

\section{Bifurcation curves associated with parabolic 4-periodic orbits in
$\mathbf{C}_\pm$}\label{sec:4par}

In this section we derive some of the equations of the bifurcation curves
displayed in Fig.~\ref{fg:1p4menys_corves},~\ref{fig:res1_4p}
and~\ref{fig:res1_4p_down}.  These curves correspond to the appearance of
parabolic 4-periodic orbits in maps~(\ref{cubH1m}) and~(\ref{cubH1pl}). 

The curves corresponding to trace$=2$ are given by the following lemma.

\begin{lemma}\label{lm:4par_minus}
The following bifurcation curves corresponding to parabolic 4-periodic orbits
with double eigenvalue $1$ exist:
\begin{enumerate}
\item For map $\mathbf{C}_-$ $($\ref{cubH1m}$)$, curves $L^i_4$,
$i=1,2,3,4$, displayed in Fig.~\ref{fg:1p4menys_corves}, are given by the
equations $($\ref{eqL412}$)$, $($\ref{eqL43}$)$ and $($\ref{eqL44}$)$.

\item For map $\mathbf{C}_+$ $($\ref{cubH1pl}$)$, curves $B^{l,r}_4$, $\hat
B^{l,r}_4$ and $C^{l,r}_4$, displayed in Fig.~\ref{fig:res1_4p}. They are
given by $($\ref{eqBlrhBlr}$)$ and $($\ref{eqClr}$)$.
Moreover, the curves $\bar D^{l,r}_4$, displayed in
Fig.~\ref{fig:res1_4p_down}, are given by $($\ref{eqbD4lr}$)$.  Also, curves
$\hat D^{l,r}_4$, displayed in Fig.~\ref{fig:res1_4p_down}, satisfy the
relations $($\ref{p4_21}$)$.
\end{enumerate}
\end{lemma}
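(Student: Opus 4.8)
The plan is to realise every listed curve as the set of parameters $(M_1,M_2)$ for which $\mathbf{C}_\pm$ has a $4$-periodic orbit whose monodromy $Df^4$ has trace $2$ (double eigenvalue $1$), and to obtain its equation by eliminating the orbit from the periodicity and trace conditions. Writing either map in the scalar form $y_{n+1}=M_1-y_{n-1}+M_2y_n\mp y_n^3$, a $4$-periodic orbit is a $4$-cycle $(y_0,y_1,y_2,y_3)$, and along it $Df^4=A_3A_2A_1A_0$ with $A_i=\left(\begin{smallmatrix}0&1\\-1&c_i\end{smallmatrix}\right)$, $c_i=M_2\mp 3y_i^2$. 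Multiplying the four factors gives the compact expression
\begin{equation*}
\mathrm{tr}\,Df^4 = 2-(c_0+c_2)(c_1+c_3)+c_0c_1c_2c_3,
\end{equation*}
so the double-eigenvalue-$1$ condition is the single symmetric equation $(c_0+c_2)(c_1+c_3)=c_0c_1c_2c_3$. This is the common starting point for all the curves.

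I would first treat the curves carried by \emph{symmetric} orbits, using the reversor $R:(x,y)\mapsto(y,x)$ (with $RfR=f^{-1}$) and the second involution $S:=R\circ f$, whose fixed sets are $\mathrm{Fix}(R)=\{y=x\}$ and $\mathrm{Fix}(S)=\{2x=M_1+M_2y\mp y^3\}$. Any $R$-invariant $4$-cycle meets $\mathrm{Fix}(R)\cup\mathrm{Fix}(S)$ in exactly two points, lying either both on $\mathrm{Fix}(R)$ or both on $\mathrm{Fix}(S)$. In the first case the symmetry points are $(a,a)$ and $(b,b)$; reversibility forces $P_1=R(P_0)$, $P_3=R(P_2)$, and the periodicity collapses, in the variables $s=a+b$, $p=ab$, to $M_2=s^2-p$, $M_1=s(1-p)$ for $\mathbf{C}_-$ and $M_2=p-s^2$, $M_1=s(1+p)$ for $\mathbf{C}_+$. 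Since here $c_0=c_3$ and $c_1=c_2$, the trace condition reads $(\alpha+\beta)^2=(\alpha\beta)^2$ and factors as $(\alpha+\beta-\alpha\beta)(\alpha+\beta+\alpha\beta)=0$, with $\alpha,\beta$ the two coincident values of $c_i$. On the genuine period-$4$ locus one factor coincides with the vanishing of the Jacobian $\det\partial(M_1,M_2)/\partial(s,p)$: this is the \emph{fold} where an elliptic and a hyperbolic symmetric orbit coalesce, and eliminating $s,p$ yields $27M_1^2=4(1+M_2)^3$ for $\mathbf{C}_-$ (curves $L_4^{1,2}$) and $27M_1^2=-4(1+M_2)^3$ for $\mathbf{C}_+$ (curves $C_4^{l,r}$, i.e.\ $(\ref{eqClr})$). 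The other factor is the symmetry-breaking \emph{pitchfork}, and the same elimination produces $(\ref{eqL43})$ for $\mathbf{C}_-$ (curve $L_4^3$) and the second line of $(\ref{eqBlrhBlr})$ for $\mathbf{C}_+$ (curves $\hat B_4^{l,r}$). Running the identical scheme with the two symmetry points on $\mathrm{Fix}(S)$ — which introduces the cubic defining $\mathrm{Fix}(S)$, whence the radicals $\sqrt{-M_2}$ — produces the remaining symmetric pitchfork curves $B_4^{l,r}$ and $\bar D_4^{l,r}$, i.e.\ the first line of $(\ref{eqBlrhBlr})$ and $(\ref{eqbD4lr})$.

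It remains to handle the curves carried by \emph{non-symmetric} orbits, $L_4^4$ and $\hat D_4^{l,r}$. For $L_4^4$ I would exploit the coincidence noted in Section~\ref{sec:map_mns}: the same curve is produced far more cheaply by the fixed point $Q$ of $\mathbf{C}_-$ on $y=x$ with coordinate $a$ satisfying $a^2=(M_2-2)/3$, which exists via the fixed-point relation $M_1=(2-M_2)a+a^3$ and is parabolic, $\mathrm{tr}\,Df=M_2-3a^2=2$, precisely when $27M_1^2=4(M_2-2)^3$, i.e.\ on $(\ref{eqL44})$; one then checks that this curve simultaneously carries a genuine non-symmetric parabolic $4$-cycle (the pair exchanged by $R$). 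For $\hat D_4^{l,r}$, where no symmetry reduction is available, I would keep the full system — the four periodicity equations for $(y_0,\dots,y_3)$ together with $(c_0+c_2)(c_1+c_3)=c_0c_1c_2c_3$ — and eliminate $y_0,\dots,y_3$ by resultants, recording the outcome as the implicit relations $(\ref{p4_21})$.

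The main obstacle is exactly this elimination together with the bookkeeping of branches and parameter ranges. Even in the symmetric cases one must argue that the factor selected from the trace condition is realised by a genuine period-$4$ orbit (real, with $a\neq b$, i.e.\ off the period-$2$ degeneration $s^2=4p$) on the stated $M_2$-interval, rather than by a spurious or complex solution; I would settle this by continuity from the organising points where the cycle is born from $P_{\pi/2}^{\pm}$, namely $P^{\pm}$ (case $A=1$) for $L_4^{1,2}$ and $P_4^{l,r}$ (degeneracy $B_{03}=0$) for $B_4^{l,r},\hat B_4^{l,r}$. The genuinely hard case is $\hat D_4^{l,r}$: the lack of symmetry forces a full four-variable elimination whose resultant is of high degree, which is precisely why the statement records these curves only through the relations $(\ref{p4_21})$ rather than in closed form, and why the tangency of $\hat D_4^{l}$ to $D_4^{r}$ at the quoted point $(M_1^*,M_2^*)$ is best confirmed numerically.
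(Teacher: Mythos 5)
Your skeleton — impose $\mathbf{C}_\pm^4(Q)=Q$ together with $\mathrm{tr}\,D\mathbf{C}_\pm^4(Q)=2$ and eliminate the orbit — is the paper's, and your continuant formula $\mathrm{tr}\,Df^4=2-(c_0+c_2)(c_1+c_3)+c_0c_1c_2c_3$ is exactly the paper's condition (\ref{p4_10}). Your $\text{Fix}(R)$ reduction (orbit $(a,a),(a,b),(b,b),(b,a)$ with $P(a)=P(b)=a+b$, hence $c_0=c_3$, $c_1=c_2$ and the factorization $(\alpha+\beta\mp\alpha\beta)=0$ into fold and pitchfork) is sound apart from the garbled line ``$P_1=R(P_0)$'' (it should be $P_3=R(P_1)$), and it does recover $L_4^{1,2}$, $L_4^3$, $C_4^{l,r}$, $\hat B_4^{l,r}$. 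What you are missing is the paper's organizing device: writing $\mathbf{C}_\pm^4(Q)=Q$ as $\mathbf{C}_\pm^{-2}(Q)=\mathbf{C}_\pm^{2}(Q)$, which factors the periodicity condition into the product (\ref{p4_5}) and puts $(y,P(x))$ and $(x,P(y))$ on the ellipse $X^2-XY+Y^2=\delta M_2$, so that every elimination is carried out in the angles $t_1,t_2$ of the parametrization (\ref{p4_7}). Without some substitute for this, your plan breaks down exactly where you admit it is ``hard''.

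Two concrete gaps. First, $L_4^4$: deriving (\ref{eqL44}) as the parabolic locus of the fixed point $Q_\pm$ and then ``checking'' that a non-symmetric parabolic $4$-cycle sits on the same curve is circular — that check \emph{is} the statement to be proved, and the coincidence with the fixed-point bifurcation is an a posteriori curiosity in the paper, not a derivation tool. The paper obtains $L_4^4$ from its Case 1.2: substituting $M_2$ from (\ref{p4_14}) into the trace condition forces $(\cos t_1,\cos t_2)$ onto the hyperbola $4X^2+16XY+4Y^2+3=0$, which is parametrized rationally and eliminated. Second, $\hat D_4^{l,r}$: your assertion that these orbits admit no symmetry reduction and require a four-variable resultant is wrong. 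In the paper they arise in Case 2, i.e.\ the orbit has a point on the \emph{second} symmetry line $\{2y=P(x)\}=\text{Fix}(\mathbf{C}_\pm\circ R)$ (with $M_2-3\delta y^2\neq 0$), and it is precisely this reduction that yields the three relations (\ref{p4_21}) in only two orbit unknowns. Indeed, if you ran your own ``identical scheme'' on the second symmetry line honestly, the trace condition would factor as $c\,[\,2(c_0+c_2)-c\,c_0c_2\,]=0$ with $c$ the repeated multiplier: the factor $c=0$ gives your $B_4^{l,r}$ and $\bar D_4^{l,r}$, and the complementary factor gives $\hat D_4^{l,r}$ — a branch your case analysis silently drops. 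As written, your argument neither produces (\ref{p4_21}) nor accounts for why those curves are recorded only implicitly.
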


\begin{proof}
	We rewrite maps~(\ref{cubH1m}) and~(\ref{cubH1pl}) in the form
$$	\begin{pmatrix} \bar x \\ \bar y \end{pmatrix} =
	\mathbf{C}_\pm \begin{pmatrix} x \\ y \end{pmatrix} =
	\left( \begin{matrix}
	y \\ -x + P(y)
	\end{matrix} \right).
	$$
where $P(y) = M_1 + M_2 y - \delta y^3$, being $\delta=1$ in 
case of map (\ref{cubH1m}) and $\delta=-1$ in case of map (\ref{cubH1pl}).  The point $Q$
is a parabolic 4-periodic orbit with trace $2$ for $\mathbf{C}_{\pm}$ if the
following conditions are satisfied: 
\begin{equation}\label{eq:par4_cond}
\text{(a) } \mathbf{C}_{\pm}^4(Q) = Q \text{ and (b) } \mbox{tr} \;
D(\mathbf{C}_{\pm}^4(Q)) = 2,
\end{equation}
where $D$ stands for the Jacobi matrix.
	
Condition (a) is obviously equivalent to 
$\mathbf{C}_{\pm}^{-2}(Q) =\mathbf{C}_{\pm} ^2(Q)$,
which yields
	\begin{equation} \label{p4_5}
	(y^2 - y P(x) + P^2(x) - \delta M_2) (-2y + P(x)) = 0,  \;\;\;
	(x^2 - x P(y) + P^2(y) - \delta M_2) (-2x + P(y)) = 0,
	\end{equation}
where $(x,y)$ are the coordinates of the point $Q$. It is easy to check that if
$(-2y + P(x))=0$ and $(-2x + P(y))=0$ simultaneously, $Q$ is actually either a
2-periodic orbit or a fixed point. For this reason, for 4-periodic orbits, we
assume that at least one of these expressions is non-zero. Then, we get 
two different cases, which we consider separately: 
\begin{itemize}
\item[] Case 1. $(-2y + P(x)) \neq 0$ and $(-2x + P(y)) \neq 0$;

\item[] Case 2. $(-2y + P(x)) = 0$, $(-2x + P(y)) \neq 0$ (the case when $(-2x + P(y)) = 0$,  $(-2y + P(x)) \neq 0$ is considered analogously).
\end{itemize}

In Case 1, equations~(\ref{p4_5}) are rewritten as follows 
	\begin{equation} \label{p4_6}
	y^2 - y P(x) + P^2(x) = \delta M_2, \;\;\; 
	x^2 - x P(y) + P^2(y) = \delta M_2.
	\end{equation}
	
The last equations show that pairs $(y, P(x))$ and $(x, P(y))$ are the
coordinates of points on the ellipse given by equation $X^2 - X Y + Y^2 =
\delta M_2$, and that~(\ref{p4_6}) has solutions for~$M_2 \ge 0$ in case
of~$\mathbf{C}_{-}$ and for~$M_2 \le 0$ in case of~$\mathbf{C}_{+}$. Thus,
we introduce the parametrization with parameters~$t_1$ and~$t_2$ along this
ellipse in such a way that for some $-\pi \le t_1, t_2 \le \pi$ we have:
	\begin{equation} \label{p4_7}
	\begin{matrix}
	y = \sqrt{\delta \frac{M_2}{3}} \cos{t_1} - \sqrt{\delta M_2} \sin{t_1}, & x = \sqrt{\delta \frac{M_2}{3}} \cos{t_2} - \sqrt{\delta M_2} \sin{t_2}, \\
	P(x) = 2 \sqrt{\delta \frac{M_2}{3}} \cos{t_1}, &  P(y) = 2 \sqrt{\delta \frac{M_2}{3}} \cos{t_2}.
	\end{matrix}
	\end{equation}
	
The values of~$x, y, P(x), P(y)$ given by the parametrization~(\ref{p4_7}) satisfy
	\begin{equation} \label{p4_8}
	\begin{split}
	2 \sqrt{\delta \frac{M_2}{3}} \cos{t_1} &= P(x) = P \left(\sqrt{\delta\frac{M_2}{3}} \cos{t_2} - \sqrt{\delta M_2} \sin{t_2}\right) =
	M_1 + \frac{2}{3 \sqrt{3}} M_2 \sqrt{\delta M_2} \cos 3 t_2,  \\
	2 \sqrt{\delta \frac{M_2}{3}} \cos{t_2} &= P(y) = P \left(\sqrt{\delta \frac{M_2}{3}} \cos{t_1} - \sqrt{\delta M_2} \sin{t_1}\right) =
	M_1 + \frac{2}{3 \sqrt{3}} M_2 \sqrt{\delta M_2} \cos 3 t_1 .
	\end{split}
	\end{equation}

Condition~(b) in~(\ref{eq:par4_cond}) gives us the equation
	\begin{equation} \label{p4_10}
	\left[P'(y) +  P'(-y + P(x))\right]  \left[P'(x) +  P'(-x + P(y))\right] = P'(y) P'(-y + P(x)) P'(x) P'(-x + P(y)),
	\end{equation}
	which, using the parametrization~(\ref{p4_7}), is rewritten as
	\begin{equation} \label{p4_11}
	16 M_2^2 \sin^2 t_1 \sin^2 t_2 \left[ 1- M_2^2 (1 + 2 \cos 2 t_1) (1 + 2 \cos 2 t_2) \right]=0,
	\end{equation}
	therefore either $|\cos t_{1,2}| > 1/2$ or $|\cos t_{1,2}| < 1/2$.

Thus, we get 3 equations, (\ref{p4_8}) and (\ref{p4_11}), for variables $t_1,
t_2, M_1, M_2$ and, in order to obtain the desired bifurcation curves, we just
need to exclude $t_1$ and $t_2$.
	
Consider first Case 1.1 when $\cos t_1 = \cos t_2$. With this, equations
(\ref{p4_8}) completely coincide and the parametric equation of the bifurcation
curve is the following
$$
M_1 = \frac{2}{3\sqrt{3}} \sqrt{\delta M_2} (3 \cos t_1 - M_2 \cos 3 t_1), \;\;\; M_2 = \frac{\delta}{|1 +  2 \cos 2 t_1|}, \;\; 0 < t_1 < \pi.
$$ 

Excluding  $\cos t_1$, we obtain the explicit formulas of the curves, namely:
in the case~$\mathbf{C}_{-}$, we get $L_4^1$ for $0 < t_1 < \pi / 3$,
$L_4^3$ for $\pi / 3 < t_1 < 2 \pi / 3$ and $L_4^2$ for $2 \pi / 3 < t_1 <
\pi$, while in the case~$\mathbf{C}_{+}$, we get $\hat B^r_4$ for $0 < t_1 <
\pi / 3$, $C^{r,l}_4$ for $\pi / 3 < t_1 < 2 \pi / 3$ and $\hat B^l_4$ for $2
\pi / 3 < t_1 < \pi$.
	
Consider next Case 1.2 when $\cos t_1 \neq \cos t_2$, then equations
(\ref{p4_8}) can be solved with respect to $M_1$ and $M_2$:
\begin{equation} \label{p4_14}
M_1 = \frac{2}{3\sqrt{3}} \sqrt{\delta M_2} (3 \cos t_2 - M_2 \cos 3 t_1), \;\; M_2 = \frac{3 (\cos t_1 - \cos t_2)}{\cos 3 t_2 - \cos 3 t_1}.
\end{equation}

Plugging $M_2$ into equation~(\ref{p4_11}) we obtain the following relation between $t_1$ and $t_2$
$$
4 \cos^2 t_1  + 16 \cos t_1 \cos t_2 + 4 \cos^2 t_2 + 3 = 0,
$$
where one can see that variables $\cos t_1$ and $\cos t_2$ satisfy the equation
of a hyperbola $4 X^2 + 16 X Y + 4 Y^2 + 3 = 0$. Thus, they can be parametrized
in the following way:
$$
\begin{array}{l}
\displaystyle \cos t_1 = \frac{\sqrt{2}}{8}(1 - \sqrt{3})t - \frac{\sqrt{2}}{8}(1 + \sqrt{3}) \frac{1}{t}, \;\:\;
\displaystyle \cos t_2 = \frac{\sqrt{2}}{8}(1 + \sqrt{3})t - \frac{\sqrt{2}}{8}(1 - \sqrt{3}) \frac{1}{t}. 
\end{array}
$$
The natural conditions $|\cos t_{1,2}| \le 1$ are fulfilled only for $t_- \le
|t| \le t_+$, where $\displaystyle t_\pm = \frac{\sqrt{3} \pm 1}{\sqrt{2}}$.
Substituting this parametrization into~(\ref{p4_14}) and excluding $t$ gives
the curve $L^4_4$ in the case~$\mathbf{C}_{-}$. Note that for this
parametrization, we have $\displaystyle M_2 = -\frac{4 t^2}{t^4 - 4 t^2 + 1}$
which takes only positive values for $t_- \le |t| \le t_+$, and this case
does not provide any bifurcation curve for~$\mathbf{C}_{+}$.

In Case 2 we get the equations $ 2y = P(x)$, $x^2 - x P(y) + P^2(y) =
\delta M_2$ and the equation~(\ref{p4_10}). The second one admits the
parametrization as before (note that $t \neq n \pi$, otherwise we have $2x =
P(y)$, i.e. a 2-periodic orbit or a fixed point):
$$
x = \sqrt{\delta\frac{M_2}{3}} \cos{t} - \sqrt{\delta M_2} \sin{t}, \;\; P(y) = 2 \sqrt{\delta \frac{M_2}{3}} \cos{t},
$$
and equation (\ref{p4_10}) is written as
\begin{equation} \label{p4_18}
2 (M_2 - 3 \delta y^2)(-4 M_2 \sin^2 t) = (M_2 - 3 \delta y^2)^2 (- 4 M_2^2 \sin^2 t) (1 + 2 \cos 2 t).
\end{equation}

Let us consider Case 2.1, when $M_2 - 3 \delta y^2=0$ in~(\ref{p4_18}),
i.e. $\displaystyle y = \pm \sqrt{\delta\frac{M_2}{3}}$.  Then we have
$$
\begin{array}{l}
\displaystyle 2 \sqrt{\delta\frac{M_2}{3}} \cos{t} = P(y) = M_1 \pm \frac{2}{3 \sqrt{3}} M_2 \sqrt{\delta M_2}, \\
\displaystyle \pm 2 \sqrt{\delta\frac{M_2}{3}} = P(x) = M_1 + \frac{2}{3 \sqrt{3}} M_2 \sqrt{\delta M_2} \cos 3 t.
\end{array}
$$

The possible bifurcation curves are given parametrically in the following form
\begin{equation} \label{p4_20}
\displaystyle M_2 = -\frac{3}{(2 \cos t \pm 1)^2}, \;\; M_1 = 2 \sqrt{\delta\frac{M_2}{3}} \cos{t} \mp \frac{2}{3 \sqrt{3}} M_2 \sqrt{\delta M_2}.
\end{equation}

Since $M_2<0$, the equations~(\ref{p4_20}) do not give any curve for the
map~$\mathbf{C}_-$. In the case of~$\mathbf{C}_+$ we obtain the equations for
curves $B^{l,r}_4$ (which corresponds to the branch that is obtained for $t
\in [0,2\pi/3)$) and the equations for the curves
$\bar{D}_4^{l,r}$ (for $t\in (2\pi/3,\pi]$).

For the Case 2.2, when $M_2 - 3 \delta y^2 \neq 0$ in~(\ref{p4_18}), we have the equations
\begin{equation}\label{p4_21}
\begin{array}{l}
2  = (M_2 - 3 \delta y^2) M_2 (1 + 2 \cos 2 t), \\
\displaystyle 2 \sqrt{\delta \frac{M_2}{3}} \cos{t} = P(y) = M_1 + M_2 y -\delta y^3, \\
\displaystyle M_1 + \frac{2}{3 \sqrt{3}} M_2 \sqrt{\delta M_2} \cos 3 t = P(x) = 2 y.       
\end{array}
\end{equation}
We have numerically checked that $\hat{D}_4^{l,r}$ fulfill these implicit equations.  
\end{proof}

Concerning parabolic 4-periodic orbits with double eigenvalue $-1$ (trace$=-2$), an analogous proof to the previous Lemma can be done. The only difference that condition~(b) in~(\ref{eq:par4_cond}) now is~$\mbox{tr} \; D(\mathbf{C}_{\pm}^4(Q)) = -2$ which gives instead of~(\ref{p4_10}) and~(\ref{p4_11}) the third equation
\begin{equation} \label{p4_10m}
\left[P'(y) +  P'(-y + P(x))\right]  \left[P'(x) +  P'(-x + P(y))\right] = 
P'(y) P'(-y + P(x)) P'(x) P'(-x + P(y))+4,
\end{equation}
that, using the parametrization~(\ref{p4_7}) in Case 1, is rewritten as
\begin{equation} \label{p4trm2_1}
16 M_2^2 \sin^2 t_1 \sin^2 t_2 \left[ 1- M_2^2 (1 + 2 \cos 2 t_1) (1 + 2 \cos 2 t_2) \right]=4.
\end{equation}

Thus, the bifurcation curves $\tilde L^i_4$, $i=1,2,3,4$, $\tilde C^{l,r}_4$
and $\tilde D^{l,r}_4$, displayed in
Figs.~\ref{fg:1p4menys_corves},~\ref{fig:res1_4p} and~\ref{fig:res1_4p_down},
respectively, should satisfy the equations~(\ref{p4_8}) and~(\ref{p4_10m}). For
example, in the simplest case $\cos t_1= \cos t_2$, which is analogous to Case 1.1
in the proof of Lemma~\ref{lm:4par_minus},  the equation~(\ref{p4trm2_1}) becomes
$$
4 M_2^2 \sin^4 t_1 \left[ 1- M_2^2 (1 + 2 \cos 2 t_1)^2 \right]=1,
$$	
which makes sense for $M_2$ to exist for $|\cos t_1| \leq \sqrt{2/5}$. Using the last equation as well as~(\ref{p4_8}), we get the parametric expression 
$$
M_1 = \frac{2}{3\sqrt{3}} \sqrt{\delta M_2} (3 \cos t_1 - M_2 \cos 3 t_1), \;\;\; M_2 = \delta \frac{\sqrt{\sin^2 t _1\pm \sqrt{\sin^4 t_1- (1+2\cos(2 t_1))^2}}}{2 |1 +  2 \cos 2 t_1| |\sin t_1|}, \;\; 
$$
where $\arccos\sqrt{{2}/{5}} \leq t_1 \leq
\arccos\left(-\sqrt{{2}/{5}}\right)$. The parametric expression gives the
curves $\tilde L^{1,2}_4$ (note that for $t_1=\pi/2$ the curves have the
intersection point $M_1=0, M_2=1/\sqrt{2}$), in the case of 
map~$\mathbf{C}_-$, and the curves $\tilde C^{l,r}_4$ (as well for $t_1=\pi/2$
the curves  intersect at $M_1=0, M_2=-1/\sqrt{2}$), in the case of 
map~$\mathbf{C}_+$.

The other cases (analogous to Cases 1.2, 2.1 and 2.2 in  the proof of Lemma~\ref{lm:4par_minus}) 
lead to cumbersome equations that we omit.

\section{Local analysis of the 1:4 resonance with degeneracy $B_{03}=0$} \label{appendix1p4deg} 

Consider a one-parameter family 
$F_{\delta}:\mathbb{R}^2 \rightarrow \mathbb{R}^2$ of area-preserving maps
which admit the reversibility $(x,y) \rightarrow (y,x)$. Let us consider $\delta \in \mathbb{R}$, small enough, 
being a parameter that unfolds a 1:4 resonant fixed point. Without loss of
generality, we assume $F_{\delta}(0)=0$ and $\text{Spec}DF_0(0)=\pm i$. There
exists a (formal) change of coordinates such that it reduces $F_{\delta}$ to its
Takens normal form $N_{\delta}$ (see \cite{Tak74,Bro90}) which commutes with
the linearized map
$\Lambda_0$ of $F_0$ at $0$, that is, 
\[ N_{\delta} \circ \Lambda_0 = \Lambda_0 \circ N_{\delta} \] 
The normal form is easily expressed in terms of complex conjugated variables $z=x+iy$, $z^*=x-iy$.
The map $\Lambda_0^{-1} N_{\delta}$ is
near-the-identity and can be (formally) interpolated by a Hamiltonian flow, that
is,
\[ \Lambda_0^{-1} N_{\delta} = \phi_{H_{\delta}}^{t=1}, \]
where the Hamiltonian function $H_{\delta}$ is $\Lambda_0$-invariant and is the sum of resonant terms
\[
H_{\delta} (z,z^*) =  \sum_{j-k \in \Gamma } h_{j,k} z^j (z^*)^k,
\]
being
\[
\Gamma = \{ s\in \mathbb{Z}, s =0 \text{ (mod 4)}\}
\]
the set of resonant monomials.

The area-preserving condition implies that coefficients $h_{i,i}$ are real. 
Moreover, the reversibility $(x,y) \rightarrow (y,x)$ imply that $h_{ij}=h_{ij}^*$.
Then, by introducing Poincar\'e polar (symplectic) coordinates 
\[ I = \frac{|z|^2}{2}, \qquad \varphi=\text{arg}(z), \]
the interpolating Hamiltonian is reduced
to 
\[
H(I,\varphi)= b_1 I + b_2 I^2 + b_3 I^3 + \mu I^2 \cos(4 \varphi) + B I^3 \cos(4 \varphi + \varphi_1) + \mathcal{O}(I^4),
\]
where all the coefficients are real and depend on $\delta$, and $\varphi_1$ is an initial phase.

We are interested in the analysis of the unfolding of the degenerate case (i.e.
the situation with $B_{03}=0$). This means that $b_1=\mu=0$ at the exact
resonance (when $\delta=0$). Note that it is natural to adjust the angle
variable so that $\varphi_1=0$. Taking into account that $b_1=\delta$ one has
\[
H(I,\varphi)= \delta I + b_2 I^2 + b_3 I^3 + \mu I^2 \cos(4 \varphi) + B I^3 \cos(4 \varphi) + \mathcal{O}(I^4).
\]

Our goal is to describe the bifurcations when $\delta,\mu \neq 0$ but small.
Note that this degenerate 1:4 resonance case leads to a codimension two
bifurcation problem. 

Ignoring higher order terms in $I$, the equations are
\begin{align}
\dot{I}&=-\frac{\partial{H}}{\partial{\varphi}} = 4 \mu I^2 \sin(4\varphi) + 4 B I^3 \sin(4 \varphi) + \mathcal{O}(I^4),\\
\dot{\varphi}&= \frac{\partial{H}}{\partial I}= \delta + 2 b_2 I + 3 b_3 I^2 + 2 \mu I \cos(4\varphi) + 3 B I^2 \cos(4 \varphi) + \mathcal{O}(I^3).
\end{align}
We look for fixed points $(I_*,\varphi_*)$  with $I_*>0$. Requiring $\dot{I}=0$ one obtains
\[ I_* = -\frac{\mu}{B} + 
\mathcal{O}(\mu^2),
\]
and substituting it into the equation $\dot{\varphi}=0$ gives
\[
(2 \mu I_* + 3 B I_*^2) \cos(4\varphi) = -(\delta+2b_2I_*) + \mathcal{O}(\mu^2), 
\]
which, fixed a small value of $\mu$, determines a small range of values of
$\delta$ (close to $-2b_2 I_*$) for which there are fixed points.  These are non-symmetric points
which are created and disappear in pitchfork bifurcations. See \cite{GLRT14} for
related comments.
One has 
\[ I_* = -\frac{\mu}{B}, \qquad \cos(4 \varphi_*) = - \frac{B}{\mu^2} \left(b_1 - 2 b_2 \frac{\mu}{B} \right). \]  
For illustrations, we consider $b_2=1$, $b_3=0$, $B=1$, $\mu=-0.15$.
The phase spaces for $\delta=-0.28,-0.29,-0.3,-0.31$ and $-0.32$ are shown in Fig.~\ref{pitchfork_deg}.

\begin{figure}[tb]
	\centering
        \hspace{-1cm}
	\includegraphics[height=5cm]{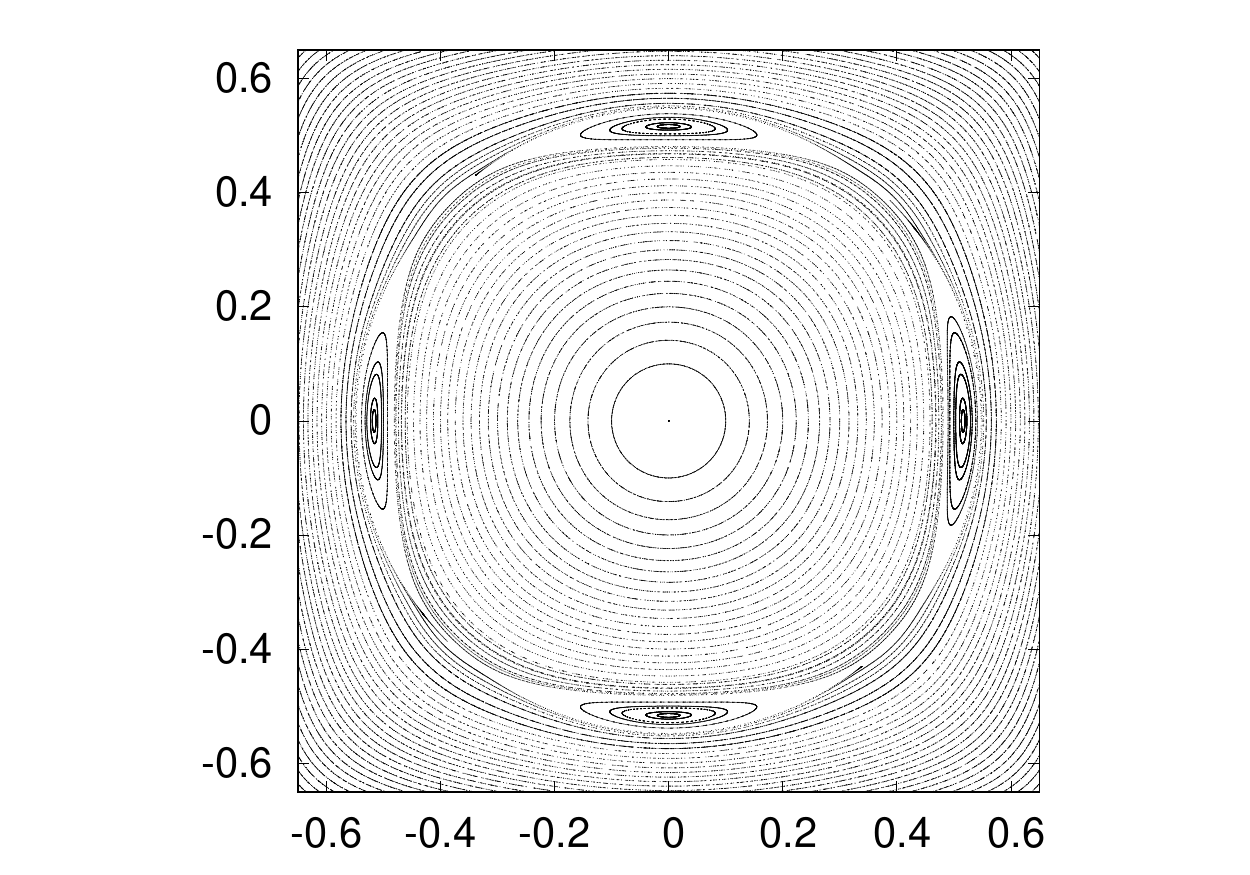}
        \hspace{-1.5cm}
	\includegraphics[height=5cm]{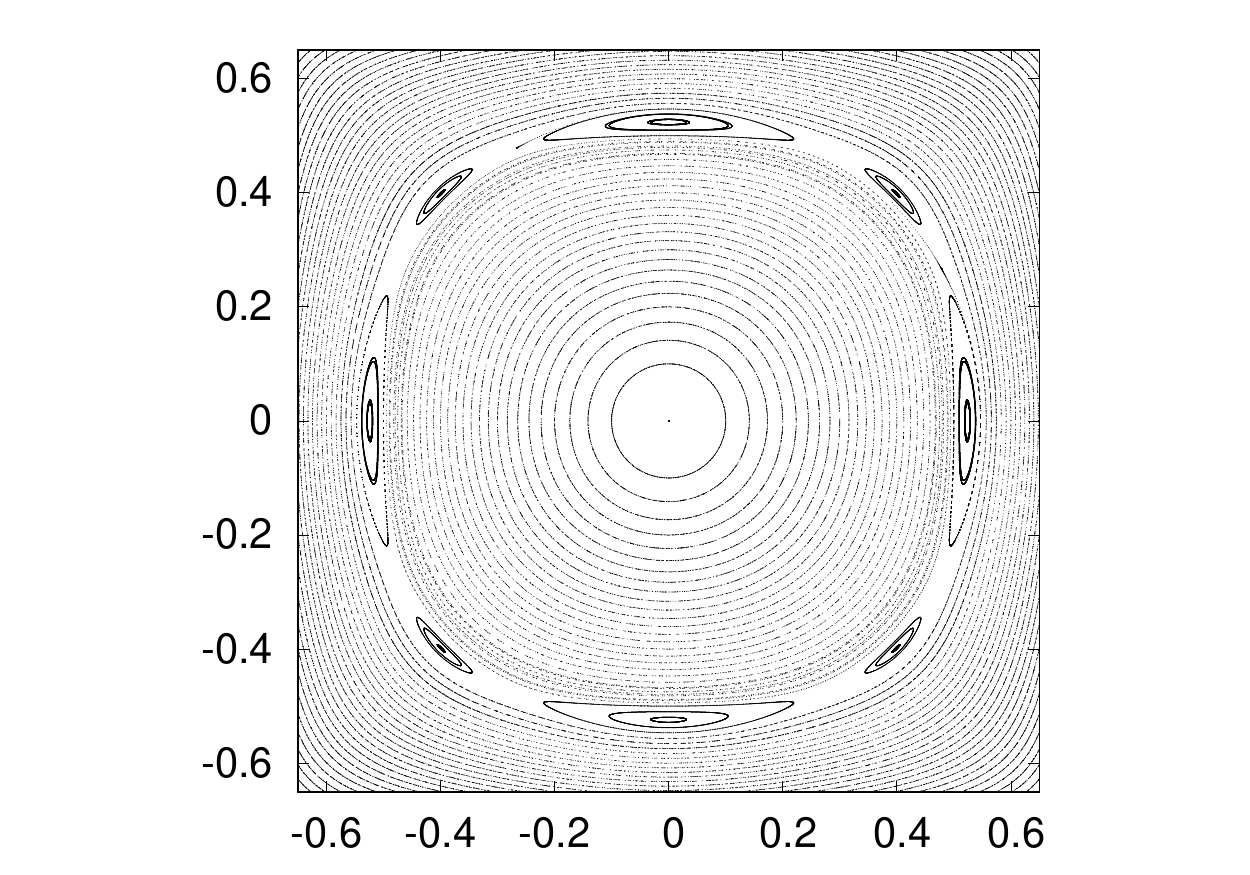}
        \hspace{-1.5cm}
	\includegraphics[height=5cm]{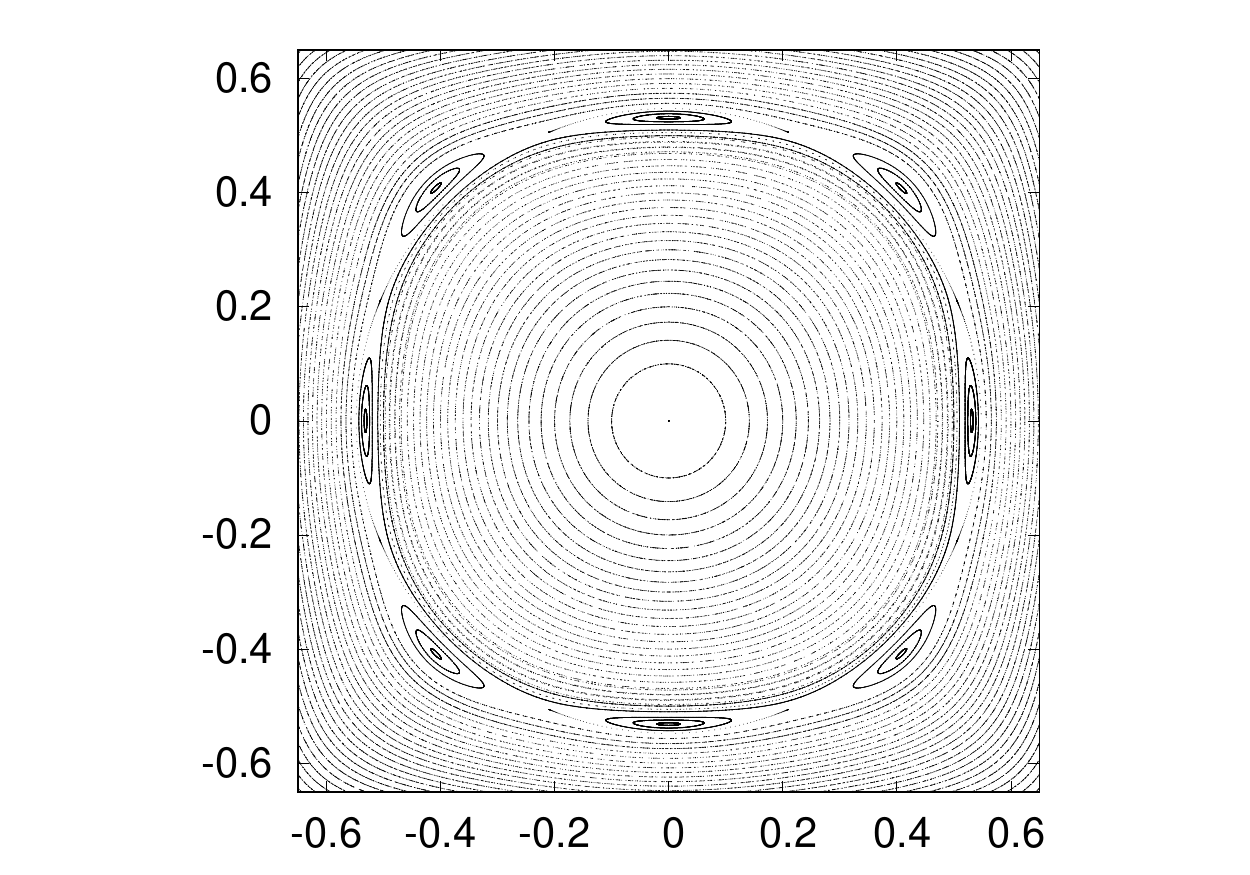} \\
	\includegraphics[height=5cm]{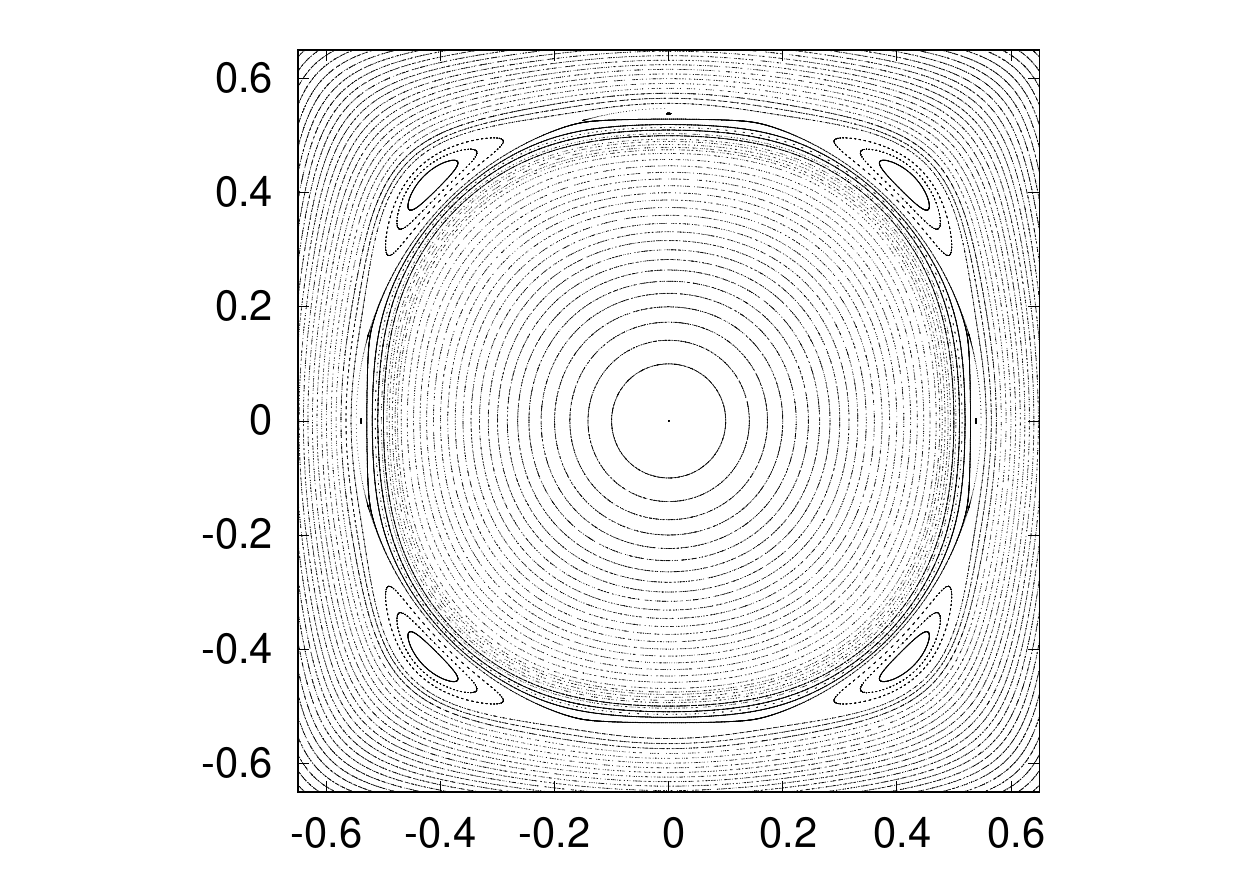}
	\includegraphics[height=5cm]{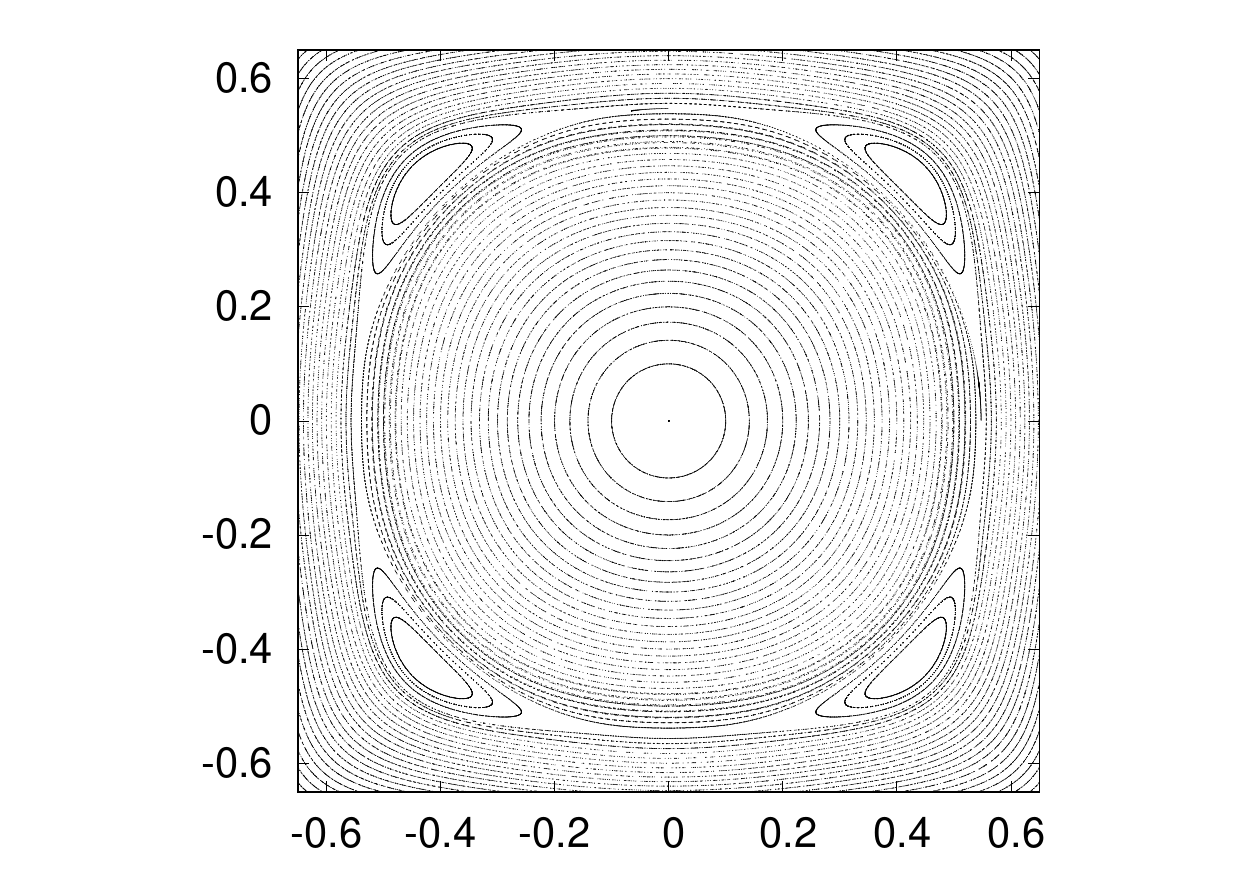}
	\caption{{ We show the phase space for $\delta=-0.28, \ -0.29, \ -0.3$ (first row) and $\delta = -0.31, \ -0.32$ (second row)
			for the selected values of the parameters, see text for details.}}
	\label{pitchfork_deg}
\end{figure}


\begin{thebibliography}{99} 

\bibitem[Arn88]{Arn-Geom}
V.I. Arnold. Geometrical Methods in the Theory of Ordinary Differential Equations.  \emph{Springer};
2nd edition, 1988.

\bibitem[AAIS86]{AAIS86} Arnold V., Afraimovich V., Ilyashenko Yu., Shilnikov L.,
Bifurcation theory, \emph{Dynamical Systems} v.5, Moscow, 1986. 

\bibitem[Bir87]{Bir87}
V. S. Biragov.  Bifurcations in a two-parameter family of conservative mappings that are close to the H\'enon
mapping. (Russian) [Translated in \emph{Selecta Math. Soviet.} 9(3):273--282, 1990.]
\emph{Methods of the qualitative theory of differential equations}, 10--24, 1987.

\bibitem[Bro90]{Bro90}
H. Broer. Notes on perturbation theory.  \emph{Series of Scientific Monographies ``Mathematics and fundamental applications''}, 1990.

\bibitem[BS89]{BS89} V.S. Biragov, L.P. Shilnikov. On the bifurcation of a saddle-focus separatrix loop in a three-dimensional conservative dynamical system. (Russian) [Translated in \emph{Selecta Math. Soviet.} 11(4):333--340, 1992.] \emph{Methods in qualitative theory and bifurcation theory}, 25–34, 1989.

\bibitem[Con78]{Con78} C. Conley. Isolated Invariant Sets and the Morse Index. CBMS Reg. Conf.
Ser. Math., 38. \emph{Amer. Math. Soc., Providence, RI}, 1978.

\bibitem[DGG15]{DGG15}
A. Delshams, M. Gonchenko, S. Gonchenko. On dynamics and bifurcations of area-preserving maps with homoclinic tangencies. \emph{Nonlinearity}, 28(9):3027--3071, 2015.

\bibitem[DGGLS13]{DGGLS13}
A. Delshams, S.V. Gonchenko, V.S. Gonchenko, J.T. L\'azaro, O.V. Sten'kin. Abundance of attracting, repelling and elliptic orbits in two-dimensional reversible maps. \emph{Nonlinearity}, 26(1):1-33, 2013.

\bibitem[DM00]{DM00}
H.R. Dullin, J.D. Meiss. Generalized H\'enon maps: the cubic diffeomorphisms of the plane. \emph{Phys. D}, 143:262-289, 2000.

\bibitem[GG09]{GG09}
M. Gonchenko, S. Gonchenko. On cascades of elliptic periodic orbits for area-preserving maps close to a map with a homoclinic tangency,  \emph{Regul. Chaotic Dyn.}, 14(1):116-136, 2009.

\bibitem[GG14]{Gelf}
V. Gelfreich, N. Gelfreich. Unique normal forms near a degenerate elliptic point in two-parametric families of area-preserving maps. \emph{Nonlinearity}, 27(7):1191-1245, 2014.

\bibitem[GGO17]{GGO17} M, Gonchenko, S. Gonchenko, I. Ovsyannikov. Bifurcations of cubic homoclinic tangencies in two-dimensional symplectic maps.  \emph{Math. Model. Nat. Pheno.}, 12(1):41-61, 2017.

\bibitem[GK88]{GK88}
S.V. Gonchenko, Yu.A. Komlev. Bifurcations and chaos in cubic maps of the plane. (Russian) \emph{Methods of the Qualitative Theory of Differential Equations}, pp 33-40, 1988.

\bibitem[GLRT14]{GLRT14}
S.V. Gonchenko, J.S.W. Lamb, I. Rios,  D. Turaev. Attractors and repellers near generic elliptic points of reversible maps. \emph{Dokl. Math.}, 89(1):65-67, 2014.

\bibitem[Gon85]{Gon85}
S.V.Gonchenko. On a two parameter family of systems close to a system with a nontransversal Poincar\'e homoclinic curve. I. (Russian) [Translated in \emph{Selecta Math. Soviet.}, 10(1):69-80, 1991] {\em Methods of Qualitative Theory of Differential Equations}, 55-72, 1985.

\bibitem[Gon05]{MGon05}
M.S. Gonchenko. On the structure of 1:4 resonances in H\'enon maps, \emph{Internat. J. Bifur. Chaos Appl. Sci. Engrg}, 15(11):3653-3660, 2005.

\bibitem[Gon16]{G16} S. Gonchenko. Reversible Mixed Dynamics: A Concept and Examples. \emph{Discontinuity, Nonlinearity, and Complexity}, 5(4):365-374, 2016.

\bibitem[GST96]{GST96}
S. V. Gonchenko, L. P. Shilnikov, D. V. Turaev. Dynamical phenomena in systems with structurally unstable Poincar\'e homoclinic orbits, {\it  Chaos}, 6(1):15-31, 1996.

\bibitem[GST97]{GST97}
S.V. Gonchenko, L.P. Shilnikov, D.V. Turaev. On Newhouse regions of two-dimensional diffeomorphisms close to a
diffeomorphism with a nontransversal heteroclinic cycle, \emph{Proc. Steklov Inst. Math.}, 216:7-118, 1997.

\bibitem[GSV13]{GSV13}
S.V. Gonchenko, C. Sim\'o, A. Vieiro. Richness of dynamics and global bifurcations in systems with a homoclinic figure-eight. \emph{Nonlinearity}, 26(3):621--678, 2013.

\bibitem[GT17]{GT17}  S.V. Gonchenko, D.V. Turaev. On three types of dynamics and the notion of attractor. (Russian) [Translated in \emph{Proc. Steklov Inst. Math.}, 297:116–137,  2017]  \emph{Tr. Mat. Inst. Steklova, Poryadok i Khaos v Dinamicheskikh Sistemakh}, 297:133–157, 2017.

\bibitem[Kra94]{Kr94}  B. Krauskopf. The bifurcation set for the 1:4 resonance problem.
\emph{Experiment. Math.}, 3(2):107-128, 1994.

\bibitem[Kuz95]{Kuz} Yu. Kuznetsov. Elements of applied bifurcation theory. Applied Mathematical Sciences, 112. \emph{Springer-Verlag, New York}, 1995.

\bibitem[LS04]{LS04}
J.S.W. Lamb, O.V. Stenkin. Newhouse regions for reversible systems with infinitely many stable, unstable and elliptic periodic orbits, \emph{Nonlinearity}, 17:1217-1244, 2004.

\bibitem[LT12]{LT12}
L.M. Lerman, D.V. Turaev. Breakdown of symmetry in reversible systems, \emph{
Regul. Chaotic Dyn.}, 17(3-4):318--336, 2012.

\bibitem[MR97]{MR97} L. Mora, N. Romero. Moser's invariant curves and homoclinic bifurcations. \emph{Dynam. Systems Appl.}, 6(1):29-41, 1997.

\bibitem[Rue81]{Rue81}
D. Ruelle. Small random perturbations of dynamical systems and the
definition of attractors. \emph{Commun. Math. Phys.} 82:137–151, 1981.

\bibitem[SV09]{SV09}
C. Sim\'o, A. Vieiro. Resonant zones, inner and outer splitting in generic and low order resonances of area preserving maps. \emph{Nonlinearity}, 22(5):1191--1245, 2009.

\bibitem[Tak74]{Tak74}
F. Takens. Forced Oscillations and Bifurcations.  \emph{Applications of Global
Analysis I. Communications of the Mathematical Institute Rijksuniversiteit
Utrecht}, 3, 1974.


\end{thebibliography}
\end{document}